\newcommand{\regret}{\textnormal{Regret}}
\newcommand{\revenue}{\textnormal{Revenue}}
\newcommand{\ratio}{\textnormal{Ratio}}
\newcommand{\all}{\textnormal{All}}
\newcommand{\worstregret}{\textnormal{WorstRegret}}
\newcommand{\worstrevenue}{\textnormal{WorstRevenue}}
\newcommand{\worstratio}{\textnormal{WorstRatio}}
\newcommand{\relperf}{\textnormal{RelPerf}}
\newcommand{\mechregret}{\Phi^*_\regret}
\newcommand{\mechrevenue}{\Phi^*_\revenue}
\newcommand{\mechratio}{\Phi^*_\ratio}
\newcommand{\mechall}{\Phi^*_\textnormal{All}}
\newcommand{\new}{\textnormal{new}}
\newcommand{\old}{\textnormal{old}}
\newcommand{\calFa}{\mathcal{F}^{\textnormal{LB}}_a}
\newcommand{\calFmu}{\mathcal{F}^{\textnormal{mean}}_{\mu}}
\newcommand{\calFmusigma}{\mathcal{F}^{\textnormal{mean+var}}_{\mu,\sigma}}
\newcommand{\calFmedian}{\mathcal{F}^{\textnormal{median}}_{\nu}}
\newcommand{\gridparam}{\mathbb{G}}
\newcommand{\gridval}{\mathcal{G}}
\begin{document}

 \doparttoc 
\faketableofcontents 

	\algrenewcommand\algorithmicrequire{\textbf{Input:}}
	\algrenewcommand\algorithmicensure{\textbf{Output:}}
	\title{
The Best of Many Robustness Criteria in  Decision Making:\\
  Formulation and Application to Robust Pricing}

	\ifx\blind\undefined
	\author{ 
	Jerry Anunrojwong\thanks{Columbia University, Graduate School of Business. Email: {\tt janunrojwong25@gsb.columbia.edu}} \and Santiago R. Balseiro\thanks{Columbia University, Graduate School of Business. Email: {\tt srb2155@columbia.edu}} \and Omar Besbes\thanks{ Columbia University, Graduate School of Business. Email: {\tt ob2105@columbia.edu}}
	}
	\fi


\date{\today}

\maketitle
\begin{abstract}

In robust decision-making under non-Bayesian uncertainty, different robust optimization criteria, such as maximin performance, minimax regret, and maximin ratio, have been proposed. In many problems, all three criteria are well-motivated and well-grounded from a decision-theoretic perspective, yet different criteria give different prescriptions. This paper initiates a systematic study of overfitting to robustness criteria. How good is a prescription derived from one criterion when evaluated against another criterion? Does there exist a prescription that performs well against all criteria of interest? We formalize and study these questions through the prototypical problem of robust pricing under various information structures, including support, moments, and percentiles of the distribution of values.  We provide a unified analysis of three focal robust criteria across various information structures and evaluate the relative performance of mechanisms optimized for each criterion against the others. We find that mechanisms optimized for one criterion often perform poorly against other criteria, highlighting the risk of overfitting to a particular robustness criterion. Remarkably, we show it is possible to design mechanisms that achieve good performance across all three criteria simultaneously, suggesting that decision-makers need not compromise among criteria. 

\end{abstract}

\newpage

\setstretch{1.5}

\section{Introduction}

Decisions often need to be made in the face of uncertain environments. If the uncertainty is stochastic and a description of the underlying stochastic elements is known, then this leads to a \emph{stochastic optimization} problem. However, decision makers rarely have perfect information about the environment. More commonly, decision makers know something but not everything, and one wants to incorporate this partial information (through an uncertainty set) in the optimization problem to be ``robust'' to the  possible data generation processes.  

Robust decision-making has a long history in the fields of Statistics, Operations Research, Computer Science and Economics. Significant efforts have been devoted to develop axiom-based criteria \citep{Savage51, Savage54, BorodinElYaniv}, to develop tractable approaches to solve robust decision-making problems \citep{BertsimasDenHertog}, or to understand general questions pertaining to the robust value of partial information. Such kind of robust decision making under non-Bayesian uncertainty requires one to specify the optimality criterion, i.e., to define what one precisely  means by ``robustly optimal.''   There has been significant effort devoted to uncertainty descriptions and understanding which ones lead to tractable problems, and the focus on the criterion of the decision-maker and the definition of robust-optimality has been mainly anchored around axiomatic discussions. 

There are many such criteria proposed in the decision theory literature. One criterion is to evaluate the objective by itself in the worst case. If the non-robust formulation maximizes expected performance, then the \textit{maximin} criterion chooses a decision to maximize worst-case performance. This criterion is sometimes referred to as the Wald criterion after \citet{Wald45}.\footnote{The common formulation is minimax risk; here, given the focal problem we focus on later in which one maximizes performance, we equivalently formulate it as maximin performance.} This worst case formulation is also very common in the robust optimization literature in Operations Research; see \citet{BertsimasDenHertog} for a review. However, a long line of work starting from \citet{Savage51} suggests that the robust performance should be compared against a benchmark of the optimal decision in hindsight; the worst-case gap between the two should be as small as possible.  Benchmark formulations are common in the online algorithms, algorithmic game theory, and machine learning literatures. This comparison/gap can be additive or multiplicative. The additive comparison is the \textit{minimax regret} criterion, minimizing worst-case regret, defined as the difference between the benchmark and the achieved performance \citep{Savage54}. The multiplicative comparison is the \textit{maximin ratio} criterion, maximizing the worst-case ratio between the performance and the benchmark \citep{Karp92}. \citet[Chapter 15]{BorodinElYaniv}  gives an axiomatic treatment of these criteria; different subsets of natural (but mutually incompatible) axioms can give rise to the  maximin performance, minimax regret, and maximin ratio criteria. 

While the three criteria above (maximin performance, minimax regret and maximin ratio) are commonly adopted and studied in the literature, 
 it is natural for decision-makers, when moving from theory to practice,  to ask which criterion they should select. Since the proposal of these criteria, various settings have been exhibited where one criterion may be trivially inappropriate \citep[Chapter 15]{BorodinElYaniv}. As a result, in some problems some criteria may not pass a sanity check and could be safely eliminated.  However, in many decision-making problems, it is often the case that all three criteria are ``reasonable,'' and no criterion can be easily discarded. As a matter of fact, we will illustrate later that all three criteria have been pursued for the classical problem of robust pricing, which has received considerable attention in  Operations Research, Computer Science, and Economics.  Indeed, each criterion is well-motivated and well-grounded from a decision-theoretic perspective. Each criterion, however, could give a markedly different prescription for decision-making. The present paper focuses on the following natural question:

\begin{quote}    
    \textit{Question 1: How good is a prescription derived from one robustness criterion when evaluated against another robustness criterion?}
    \end{quote}
In addition, as mentioned earlier, practitioners are interested in a prescription to follow and may inherently care about the various advantages of the three criteria but not necessarily have a  preference over the criteria. The maximin performance criterion protects against bad scenarios, while the minimax regret and the maximin ratio criteria ensure that the decision-maker does not miss out on the possible upside if a more favorable environment materializes. In turn, we ask whether the decision-maker has to make a choice across the focal criteria. 
\begin{quote}    
   \textit{Question 2: Does there exist a prescription that performs well under all robustness criteria of interest?}
    \end{quote}
In other words, can one have \emph{the best of the three focal robustness criteria}?

We take a first step toward answering these questions by focusing on a well-studied problem across disciplines and uncertainty structures: robust pricing. In a canonical monopoly pricing problem, given the buyers' value distribution $F$, the seller seeks to select a price $p^*$ to maximize revenue, i.e., $p^* \in \arg\max p \bar{F}(p)$ with $\bar{F}(p) = \Pr_{v \sim F}(v \geq p)$.  If the decision maker knows the environment (i.e., the seller knows the value distribution $F$), then the problem is a standard  optimization problem, and the seller simply trades off realized profit and sales volume. In a robust version of the problem, the seller does not know the value distribution $F$ exactly, but knows that it belongs to a class of distributions $\mathcal{F}$. In turn, the decision-maker wants its decision to be ``robustly optimal'' when $F$ can take any value in $\mathcal{F}$.

There is a large stream of literature on robust pricing that we review in \S\ref{sec:lit}. In this paper, we will explore the central questions above across various classical settings. In particular, we will focus on robust pricing settings across various information structures (formalized as classes of uncertainty sets) regarding the partial knowledge of the decision-maker on the distribution of values $F$:  the support; the support and the mean;  the support, the mean and the variance; the support and a quantile. 
Quite notably, we observe that many classes of uncertainty sets have been studied under different robustness criteria across studies. For example, the setting with known support and the first or two moments was studied under the three robustness criteria:  for maximin revenue in \citep{CarrascoLKMMM18-selling-moment-conditions}, for minimax regret in  \citet{shixin-regret}, and  for maximin ratio in \citet{Azar-Micali-parametric,shixin-ratio}.    
For support information, while the maximin revenue criterion is trivial (in that an optimal solution is to price at the lower bound of the support),  both the minimax regret and maximin ratio lead to non-trivial solutions that were studied in parallel in  \citet{BergemannSchlag08} for minimax regret and in \citet{ErenMaglaras10} for maximin ratio. In such settings, each criterion leads to a different pricing mechanisms one could use. This naturally leads to the prescriptive question: which mechanism should one adopt under a particular uncertainty set?

\subsection{Summary of Main Contributions}

At a high level, the present paper initiates the systematic study of the impact of robustness criterion selection, the overfitting that may arise, and the possibility to mitigate such overfitting.   

\paragraph{Unified Analysis.} For each robustness criterion, a corresponding \textit{robustly optimal} mechanism  optimizes against the worst case of that criterion. Therefore, we have focal mechanisms associated with  maximin revenue, minimax regret, and maximin ratio. Each problem is a zero sum game between the seller and Nature, and can be a challenging mathematical program, as the decision-maker solves a high dimensional problem and so does Nature. Building on earlier work and Lagrangian duality, we are able to present a unified analysis of the three focal criteria across various uncertainty structures, which enables us to study the central questions in the present paper.  As we will elaborate in \S\ref{sec:lit},  significant progress has been made on robust pricing  in the last two decades. 
 The goal of the present paper is to bring focus on the criterion in robust decision-making problems and the prescriptive questions this brings to the foreground. In particular, as we will see in a moment, the unification allows to highlight the perils of overfitting to the criterion, but also the possibility of correcting for that (in the context of robust pricing).

\paragraph{Performance of Focal Mechanisms.} We first study how the three mechanisms that emerge as optimal against one criterion fare against another criterion. To do so, we introduce the relative performance of a mechanism for a criterion, defined  as the percentage of best robust performance achievable. The relative worst-case revenue (resp. worst-case ratio) performance of a mechanism is the ratio of its worst-case revenue (resp. ratio) to the maximin revenue (resp. maximin ratio).  The relative worst-case regret  performance of a mechanism is the ratio of its minimax regret to its worst-case regret. All relative performances are always between 0 and 1, and the closer to 1, the closer a mechanism performs to an optimal mechanism for that robustness criterion. 

The forms of the focal mechanisms associated with each criterion as well as their performance clearly depends on the parameter value of the known statistic in question (whether that is mean, variance, median, or lower bound). To concretely answer the paper's two central questions, we focus on a family of instances in which the parameter value varies across a wide range, enumerated in a grid, and evaluate the cross-criteria performance (cf. \S\ref{sec:across}).  For simplicity of exposition, we report the worst case here, but we provide the full set of results in \S\ref{sec:best-results}. To evaluate the relevant performance metrics of interest exactly, we consider a setting in which the valuation distribution can take a finite (but potentially large) number of values.

We start with the prototypical setting of known mean and upper bound (``support'') information since this setting was studied under the three robust criteria separately in the literature. 
 Consider first the mechanism that maximizes worst-case revenue. We show that the minimax regret can be as low as 58\% of the worst-case regret under this mechanism, and the worst-case ratio can be as low as 75\% of the maximin ratio. In other words, while the mechanism is robust based on one criterion, it can perform poorly with regard to the other robust criteria.  Now consider the mechanism that minimizes worst-case regret. Under support and mean information, we show that this mechanism could lead to 45\% of the highest worst-case revenues, and 44\% of the maximin ratio.  Finally, when one optimizes for the worst-case ratio, the prescription performs well with respect to regret, achieving 93\% of the optimal regret, but the mechanism revenue can be as low as  68\% of the optimal worst-case revenue (Proposition~\ref{prop:across-mean} and Table~\ref{table:WC-Rel-robust-pricing-F-mu}). 
 
 While it is well known that the worst-case revenue criterion could be overly pessimistic when little information is available, the above highlights that even with more knowledge of the distribution (the mean), the worst-case revenue criterion leads to a prescription that does not fare well when one accounts for the upside of a decision through a benchmark. Similarly, while the regret and ratio criteria are thought to mitigate the pessimism of the worst-case revenue criterion, the prescriptions they lead to are also fragile in the sense that they can lead to very poor performance across other criteria. 

The negative results above are not hand-picked examples. They are representative across uncertainty sets.  We show that all three focal mechanisms can perform very poorly against another criterion across the various information settings described above. We summarize these cross-criteria performance results in Table~\ref{table:WC-Rel-robust-pricing-F-mu} (mean information), Table~\ref{table:WC-Rel-robust-pricing-F-mu-sigma} (mean and variance information), Table~\ref{table:WC-Rel-robust-pricing-F-q} (median information), and Table~\ref{table:WC-Rel-robust-pricing-F-a} (lower bound information), all in Section~\ref{sec:across}. 

\paragraph{The best of many criteria.} Having derived the above results, we ask if it is possible to perform well across all three criteria. We propose to capture the best-of-many-criteria performance as the \textit{relative performance guarantee} that holds \textit{uniformly over all criteria} (cf. \eqref{eqn:relperf-all-def}, \eqref{eqn:c_star} and \eqref{eqn:relperf-decoupled}). In other words, it is worst case not only over environments in the uncertainty set, but also over criteria. We focus on the \textit{relative performance} of each criterion (that is, divided by the optimal performance if we were to solely focus on that criterion) to normalize the performance metrics to between 0 and 1 and make them comparable across criteria. This performance metric has an intuitive interpretation: if we have a mechanism with a guarantee of, say, 80\%, it means the mechanism achieves at least 80\% of the maximin revenue, at most 1/80\% of the munimax regret, and at least 80\% of the maximin ratio simultaneously.

For each instance, we can compute the optimal uniformly robust mechanism and its performance as follows. We first compute the maximin  revenue, minimax regret and maximin ratio. For each $c \in [0,1]$, we can formulate a linear feasibility program that checks whether a mechanism has worst-case revenue, regret, and ratio within a factor of $c$ of the optimal, and returns such a mechanism if one exists. The best performance and mechanism corresponds to the largest feasible such $c$, which can be computed via binary search (cf. Proposition~\ref{prop:rho-lp}).

Returning to the broad classes of uncertainty sets described earlier, we show that for each class of uncertainty sets it it is possible to design mechanisms that can do well across the three focal criteria (Theorem~\ref{thm:factor-c-lb}).  Table~\ref{table:relperf-all-vs-focal-intro} below summarizes, for each type of uncertainty sets (where we fix an upper bound of the support), the performance across criteria of the uniformly robust optimal mechanism that we design and compare it to that of the three focal criteria.
\begin{table}[h!]
\centering
\begin{tabular}{c || c || c | c | c } 
  Additional & Uniformly Robust & \multicolumn{3}{c}{Focal Mechanisms} \\
  \cline{3-5}
 Information &  Mechanism & revenue & regret & ratio  \\ [0.5ex] 
 \hline
 \hline
 mean & \textbf{92\%} & 58\% & 44\% & \textbf{68\%}  \\
  mean and variance & \textbf{86\%} & 51\% & 49\% & \textbf{71\%} \\ 
  median & \textbf{61\%} & 34\% & 0\% & \textbf{41\%}\\ 
    lower bound & \textbf{58\%} & \textbf{33\%} & 0\% & 31\% \\ 
 \hline
\end{tabular}

\caption{Worst-case (across instances) relative performance across all criteria of the uniformly optimal mechanism, compared to that of the three focal mechanisms: maximin revenue mechanism (``revenue''), minimax regret mechanism (``regret'') and maximin ratio mechanism (``ratio''). The performances of the uniformly robust optimal mechanism as well as the best among all focal mechanisms are bolded for emphasis.}
\label{table:relperf-all-vs-focal-intro}
\end{table}

These results provide a broad picture of achievability results. For example, with mean information, quite strikingly, there is a mechanism that guarantees relative performance at least 92\% across the three criteria, even though none of the three focal mechanisms could achieve performance exceeding 68\%. Across settings, we observe that there is significant room to increase robustness to criteria. We emphasize that the performance numbers shown in Table~\ref{table:relperf-all-vs-focal-intro} are worst-case across instances. For some instances, the performance of the optimal uniformly robust mechanism can be much higher than $61\%$. For example, when the median is 0.5, 0.6, or 0.7, the guarantees are 93\%, 98\%, and 93\%, respectively.

Our results demonstrate that, in some sense, decision-makers need not compromise by choosing one robust criterion and run the risk of overfitting to it.  It is possible to do well against all three criteria, despite their distinctive nature, which may be a practically appealing desiderata in robust decision-making. Finally, while we focus on the robust pricing problem, we hope our work ushers in the study of criteria-robustness across other decision problems.

\subsection{Related Work}\label{sec:lit}

Our work is related to several streams of literature.

\paragraph{Robust Pricing.} Our work is closely related to the literature on robust monopoly pricing. 

With only knowledge of the support, \citet{BergemannSchlag08} studies minimax regret and \citet{ErenMaglaras10} studies maximin ratio.

With knowledge of some moment(s), \citet{kos-messner} and \citet{CarrascoLKMMM18-selling-moment-conditions} study the maximin revenue criterion, while \citet{shixin-regret} studies the minimax regret criterion. For the maximin ratio,   \citet{Azar-Micali-parametric}  gives an optimal deterministic mechanism when the mean and variance are known,  while \citet{shixin-ratio} gives an optimal randomized mechanism for general distributions and arbitrary moment information.  More closely related to our paper is \citet{ChenHuPerakis22-distribution-free-pricing}. While not their main focus, the paper is one of the exceptions that illustrates some form of cross-criteria performance. In particular, the authors, for the specific case of known mean and variance,  study the optimal deterministic strategy  under the three criteria and evaluate them under the worst-case distribution of the maximin revenue criterion. As such, \citet{ChenHuPerakis22-distribution-free-pricing} also suggests that the optimal prices for regret and ratio may not perform well for the revenue criterion.  In contrast, we systematically consider the performance transfer of all pairs of old and new criteria, study general randomized mechanisms and many classes of uncertainty sets, and evaluate the worst-case specific to each transfer pair, and the possibility of being robustly optimal across all three criteria.

 Under the maximin revenue criterion, \citet{chen-hu-wang-screening-dual} provides an overview of some studies and connections across problem classes. 

With knowledge of  a quantile, \citet{ErenMaglaras10}  studies maximin ratio under general distributions with bounded support. In the absence of support information but with  shape constraints (regular or monotone hazard rate),  \citet{AzarDMW13} gives a constant-factor-approximation for deterministic mechanisms, and \citet{AllouahBahamouBesbes21-pricing-single-point} derive the maximin ratio. These results highlight the value of a single quantile. \citet{shixin-simple-menus} studies the power of simple mechanisms (randomization over a few prices) for general distributions with arbitrary quantiles and support information.   With quantile information, other robust approaches have also been considered. \citet{Chehrazi2010} considers data-driven robust optimization with splines.

This work is also related to the broader robust mechanism design literature with  multiple products \citep{Carroll17-robust-screening,GravinLu18, KocyigitRK21-old, ChenHuPerakis22-distribution-free-pricing}, or multiple buyers \citep{ AllouahBesbes20, HartlineJohnsenLi20-benchmark-design, RougTalg2019, our-paper-ec,our-paper-ec-a-b,Suzdaltsev20-dr-auction,BachrachTalgamCohen22}.  %

\paragraph{Robust Decision Making Under Uncertainty.} Our work is also related to robust decision making, robust optimization and distributionally robust optimization. As discussed in the introduction, multiple robust optimality criteria have been studied in decision theory and related areas: worst-case performance \citep{Wald45}, regret (or absolute regret) \citep{Savage51,Savage54}, and ratio (or competitive ratio, or relative regret) \citep{BorodinElYaniv}. In some cases, some of these criteria may be inappropriate, but in many other cases, all of them are reasonable, and our work has shown that in the case of pricing, it is not necessary to commit only to one criterion. While these three aforementioned criteria are most common, other possible criteria include the pessimism-optimism index (the weighted average of the best case and the worst case, due to \citet{Hurwicz1951}) and principle of insufficient reason (assign every contingency equal probability, due to Laplace \citep{HowsonUrbach1989}). \citet{Stoye2011} unifies and extends the axiomatic treatment of minimax regret. 

All the aforementioned criteria deal with \textit{non-Bayesian} uncertainty, when the decision maker does not know enough about the likelihood of different outcomes. These are in contrast to the Bayesian approach where the uncertainty is modeled as a known probability distribution over outcomes. \citet{vonNeumannMorgenstern1944} laid the foundation for expected utility approach under Bayesian uncertainty, which was further developed by \citet{harsanyi1955cardinal,Savage54,arrow1951alternative}. These decision-theoretic models (for both non-Bayesian and Bayesian uncertainty) can be viewed either as \textit{normative}, giving a prescription for how one should make decisions, or \textit{positive}, describing and predicting how humans actually make decisions. The Allais and Ellsberg paradoxes \citep{allais1953paradox,ellsberg1961paradox} cast doubt on the validity of the expected utility model as a positive description in some cases, giving rise to alternative models such as prospect theory \citep{kahneman1979prospect}. There are also intermediate approaches that combine both the Bayesian and non-Bayesian aspects of uncertainty, such as ambiguity aversion, or maximin expected utility/minimax expected regret with multiple priors \citep{gilboa1989maxmin}.

\paragraph{Robust Optimization.}  The robust optimization \citep{bertsimas-robust-survey,BertsimasDenHertog} and distributionally robust optimization \citep{dro-review} literatures tend to focus more on deriving computationally efficient algorithms in different settings, with the emphasis on robustness over unknown environments and the worst-case performance criterion formulation is often adopted. This work highlights an orthogonal and often overlooked aspect that robustness across criteria is also important. 

 The robust optimization framework has been used in many application areas including newsvendor \citep{roels-perakis-newsvendor}, assortment optimization \citep{rusmevichientong-robust-assortment-optimization}, scheduling \citep{daniels1995-robust-scheduling}, process flexibility \citep{wang2022-robust-process-flexibility}, and security games \citep{Kiekintveld-robust-security-games}. We consider the pricing problem in this paper as a starting point, but our proposed best-of-many-criteria framework is broadly applicable to robust decision-making problems in general, and it is worth revisiting these problems to see under which problems and information structure the prescriptions robustly transfer, and which ones admit solutions that are uniformly robust.

\section{Best of Many Robustness Criteria  Formulation: Robust Pricing}\label{sec:problem-formulation}

We study the mechanism design problem in which a seller wants to sell a single product to a buyer. The buyer has a value for the product drawn independently from a distribution $F$. (Equivalently, there is a continuum of buyers, and $F$ represents the distribution of buyer valuations.) 

If the seller knew the distribution $F$ precisely, then classical results \citep{Myerson81,RileyZeckhauser83} tell us that the deterministic posted-price mechanism is optimal, with optimal revenue given by 
\begin{align}
\textnormal{OPT}(F) = \max_{p} p \bar{F}(p) , \label{eqn:opt-revenue}
\end{align}
with $\bar{F}(p) = \Pr_{v \sim F}(v \geq p)$. In other words, this problem is equivalent to the monopoly pricing problem. It is also known as the screening problem in the mechanism design literature.\jaedit{\footnote{We will assume throughout that if the valuation equals the price, then the buyer buys the product.}}

However, here the seller does not know $F$. The seller only knows the values are supported on a set $\gridval \subseteq [a,b]$ as well as, potentially, the first $I$ moments $m_i$ for moment indices $i \in \mathcal{I} \equiv \{0,1,\dots,I\}$ with $m_0 = 1$, as well as $J$ quantiles: the $r_j$ quantile is $q_j$ for quantile indices $j \in \mathcal{J} \equiv \{1,2,\dots,J\}$. 
 Formally, let $\gridval \subseteq [a,b]$ be the set of possible values and $\Delta(\gridval)$ denotes the set of distributions supported on $\gridval$, then $F$ belongs to the uncertainty set $\mathcal{F}$ given by
\begin{align}\label{eqn:uncertainty-set}
    \mathcal{F} = \left\{ F \in \Delta(\gridval) : \int_{v \in \gridparam} v^{i} dF(v) = m_i \text{ for } i \in \mathcal{I}, \text{ and } \bar{F}(r_j) = q_j \text{ for } j \in \mathcal{J}  \right\}.
\end{align}
This form of the uncertainty set is very general and is in line, e.g., with the one recently considered by \citet{shixin-simple-menus} to study simple mechanisms, and captures many different forms of partial information that the seller may have about the distribution. In particular, it captures various prototypical forms of uncertainty: knowing support only, support and mean, support as well as mean and variance, support and median, correspond to $(I,J) = (0,0),(1,0),(2,0),(0,1)$ with $r_j = 1/2$ in the median case. 

The moments are commonly used summary statistics for the distribution, especially the first two moments. The mean $\mu$ and variance $\sigma^2$ of the distribution correspond to $m_1 = \mu, m_2 = \mu^2 + \sigma^2$. The mean and variance, respectively, capture the central tendency and dispersion of the valuation distribution and are commonly estimated and used by business managers \citep{ChenHuPerakis22-distribution-free-pricing}. 

 The median is also a common measure for central tendency and it corresponds to $r_j = 1/2$.  Apart from a priori estimates of quantiles, quantile data are also a natural form of \jaedit{partial information} that could potentially be obtained though transaction data and associated conversion rates at historical prices \citep{AllouahBahamouBesbes21-pricing-single-point}.

\paragraph{Seller's mechanisms.} We will now consider the seller's mechanism class. By \citet{Myerson81}, any incentive compatible mechanism in this one-seller-one-buyer case is equivalent to randomized pricing. Given this, henceforth, we assume that the seller optimizes over the distributions of posted prices $\Phi(\cdot)$ over the set $\mathcal{M}$ of valid CDFs.  Note that because we know the values are in $\gridval$, any optimal price is also in $\gridval$, so it is sufficient to consider a distribution of prices over $\gridval$. In particular, if $\gridval = \{v_0,v_1,\dots,v_{K}\}$ is a discrete set, then the seller optimizes over distributions with weight $\phi_i$ on $v_i$ for $i\in [K]$ with $\phi_i \geq 0, \sum_{i} \phi_i = 1$.

\paragraph{Robust criteria.} We are now ready to define formally the three robust optimality criteria, discussed  in the introduction. Given a mechanism $\Phi$  and a distribution $F$, we define the  revenue, regret, and ratio as
\begin{align*}
 \revenue(\Phi,F) &:= \int s \left(\int_{v \ge s} dF(v)\right) d\Phi(s) = \int\int_{s \leq v} s d\Phi(s) dF(v),
 \\
    \regret(\Phi,F) &:= \textnormal{OPT}(F) -  \revenue(\Phi,F), \\
    \ratio(\Phi,F)  &:= \frac{ \revenue(\Phi,F) }{\textnormal{OPT}(F)}.
\end{align*}
Note that if we want the revenue to be high compared to the benchmark, we want regret to be low and ratio to be high. Given that seller does not know $F$, a robust formulation proceeds to assume that Nature selects  $F$ in an adversarial fashion. Therefore, given a fixed mechanism $\Phi$, the worst case performances of this mechanism in these 3 metrics are given by
\begin{align*}
     \min_{F \in \mathcal{F} } \revenue(\Phi,F), \: \max_{F \in \mathcal{F} } \regret(\Phi,F), \: \textnormal{ and } \: \min_{F \in \mathcal{F} } \ratio(\Phi,F).
\end{align*}

The criterion of worst-case revenue protects against the possibility of environments (distributions) with low revenues. On the other hand, both the worst-case regret and worst-case ratio account for what would have been possible  ($\textnormal{OPT}(F)$), and evaluate a mechanism based on the actual environment that materializes. A good mechanism for such criteria should not only protect against environments with low revenue opportunities, but also ensure that the seller does well in environments with high revenue opportunities.

\paragraph{Focal mechanisms.} For each criterion, the corresponding \textit{robust} mechanism  optimizes the worst case of the associated criterion. Therefore, we have maximin revenue $\theta^*_\revenue$, minimax regret $\theta^*_\regret$, and maximin ratio $\theta^*_\ratio$ defined as 
\begin{align*}
  \theta^*_\revenue(\mathcal{F})  &:= \max_{\Phi \in \mathcal{M}} \min_{F \in \mathcal{F} } \revenue(\Phi,F), \\
    \theta^*_\regret(\mathcal{F})  &:= \min_{\Phi \in \mathcal{M}} \max_{F \in \mathcal{F} } \regret(\Phi,F), \\
    \theta^*_\ratio(\mathcal{F})  &:=  \max_{\Phi \in \mathcal{M}} \min_{F \in \mathcal{F} } \ratio(\Phi,F).
\end{align*}

 Assuming such mechanisms exist, we denote by $\mechrevenue$ a maximin revenue optimal  mechanism, $\mechregret$ a minimax regret optimal  mechanism,  and $\mechratio$ a maximin ratio optimal mechanism.\footnote{The mechanisms and their performance values depend on $\mathcal{F}$, but sometimes we omit this dependence for simplicity.}

\paragraph{Performance across robust criteria.} We are now ready to formally define the objects that will allow to answer the first question we asked in the introduction, namely, how robust is the mechanism derived from one criterion for other criteria. Given that three criterion-specific performance values are expressed on different scales, to be able to compare across settings, we will measure robust performance across criteria in relative terms. More specifically, we will measure the worst-case performance of a mechanism compared to the performance of the optimal mechanism for that criterion.

Fix an uncertainty set $\mathcal{F}$. Formally, the relative performances of a given mechanism $\Phi$ on the revenue, regret,  and ratio criteria are defined by
\begin{align*}
        \relperf(\Phi, \revenue, \mathcal{F}) 
        &= \frac{\worstrevenue(\Phi,\mathcal{F})}{\theta^*_\revenue}, \\
    \relperf(\Phi, \regret, \mathcal{F}) 
    &= \frac{ \theta^*_\regret }{\worstregret(\Phi, \mathcal{F})}, \\
    \relperf(\Phi, \ratio, \mathcal{F}) 
    &= \frac{\worstratio(\Phi,\mathcal{F})}{\theta^*_\ratio}.
\end{align*}
Note that the definition of relative performance is different (the reciprocal) for regret, compared to the two other criteria. This is simply because a decision maker tries to minimize regret, while it tries to maximize revenue and the ratio. When defined as above, the relative performance of a mechanism is always between 0 and 1 and the higher the relative performance the more robust the mechanism is according to that criterion. This object enables us to quantify the robustness of each focal mechanism $\mechrevenue, \mechregret,  \mechratio$ against other criteria.

\paragraph{The best of many robustness criteria.} While the focal point of the literature has been on robustness to environments as captured by the uncertainty set $\mathcal{F}$,  here we explore robustness to environments \emph{as well as} criteria. 
For a given mechanism $\Phi$,  we can evaluate its relative performance with respect to each criterion. Rather than fixing the mechanism to be one of the focal mechanisms, we propose to  directly optimize over all mechanisms for the best relative performance guarantee over all three criteria. Formally, we define the relative performance of a mechanism $\Phi$ over all criteria as
\begin{align}\label{eqn:relperf-all-def}
    \relperf(\Phi,\all,\calF) 
  = \min_{F \in \mathcal{F}}   \min\left\{ \frac{\revenue(\Phi,F)}{\theta^*_\revenue} , \frac{\theta^*_\regret}{\regret(\Phi,F)}, \frac{\ratio(\Phi,F)}{\theta^*_\ratio}\right\},
\end{align}
and the \textit{multi-criterion robust approximation factor} $c^*(\mathcal{F})$ as
\begin{align} \label{eqn:c_star}
    c^*(\mathcal{F}) = \max_{\Phi \in \mathcal{M}} \relperf(\Phi, \all, \mathcal{F}).
\end{align}
Denote by $\mechall$ a mechanism that achieves the optimal factor $c^*$.
The definition of $c^*$ implies that the mechanism $\mechall$  simultaneously achieves a fraction $c^*$ of the tailored (optimal) mechanism for all three criteria, and it is the highest constant possible. If $c^*(\mathcal{F})$ is close to 1, then the uncertainty set $\mathcal{F}$ allows us to perform almost as well as any criterion-tailored mechanism. 

\paragraph{Remark.} We note that it is easy to see that $\relperf(\Phi,\all,\calF)$ can be equivalently written as
    \begin{align}\label{eqn:relperf-decoupled}
        \relperf(\Phi,\all,\calF) 
    = \min\left\{ \frac{\min_{F \in \calF} \revenue(\Phi,F) }{ \theta^*_\revenue}, \frac{\theta^*_\regret}{ \max_{F \in \calF} \regret(\Phi,F) } , \frac{ \min_{F \in \calF} \ratio(\Phi,F) }{ \theta^*_\ratio }  \right\}.
    \end{align}
We formalize this in Proposition \ref{prop:relperf-equivalent}, and we will use this fact to develop various alternative representations.

\section{The Best of Many Robustness Criteria Through Linear Programs}\label{sec:lp-formulation}

\subsection{Unified Evaluation  Through $\lambda$-Regret} 
We first observe that, as noted in \cite[Proposition 1]{our-paper-ec-a-b}, one can unify the maximum revenue, minimax regret,  and maximum ratio criteria by focusing on the $\lambda$-regret. Fix $\lambda \in [0,1]$. We define the $\lambda$-regret of a given mechanism $\Phi$, $R_{\lambda}^{\Phi}(\mathcal{F})$, and the minimax $\lambda$-regret, $R_{\lambda}^*(\mathcal{F})$,  as follows.
\begin{align*}
R_{\lambda}^{\Phi}(\mathcal{F}) &:=  \max_{F \in \mathcal{F}} \left[ \lambda \textnormal{OPT}(F) - \revenue(\Phi,F) \right],\\
    R_{\lambda}^*(\mathcal{F}) &:= \min_{\Phi \in \mathcal{M}} \max_{F \in \mathcal{F}} \left[ \lambda \textnormal{OPT}(F) - \revenue(\Phi,F) \right].
\end{align*}
The following proposition implies that if one can calculate the minimax $\lambda$-regret for every $\lambda \in [0,1]$, then one can immediately derive the robust value associated with all three criteria.

\begin{proposition}[\citep{our-paper-ec-a-b}]\label{prop:minimax-regret-exp}
Fix a class of distributions $\calF$ as in equation (\ref{eqn:uncertainty-set}). Suppose that the mapping $\lambda \mapsto R_{\lambda}^*(\mathcal{F})$ is known. Then one can directly obtain the maximin revenue, minimax regret and maximin ratio as follows:  $\textnormal{MinimaxRegret}(\mathcal{F}) = R_{1}^*(\mathcal{F})$,  $\textnormal{MaximinRevenue}(\mathcal{F}) = - R_{0}^*(\mathcal{F})$,  and $\textnormal{MaximinRatio}(\mathcal{F})$ is the largest constant $\lambda \in [0,1]$ such that $R_{\lambda}^*(\mathcal{F}) \leq 0$.\jaedit{\footnote{While we focus on $\lambda$-regret as a unifying framework in the main text, we note that in the case where ratio is the objective, it is also possible to formulate a single linear program to compute the maximin ratio directly, which is more computationally efficient compared to the line search approach where we solve a series of linear programs, one for each value of $\lambda$, to find the highest $\lambda$ such that minimax $\lambda$-regret is nonpositive as shown by Proposition~\ref{prop:minimax-regret-exp}. The two approaches, by necessity, give the same answer. See Appendix~\ref{sec:maximin-ratio-lp} for details.}}
\end{proposition}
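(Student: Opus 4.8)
The plan is to dispatch the three claims separately, treating the endpoints $\lambda \in \{0,1\}$ by direct substitution into the definition of $R_\lambda^\Phi$, and reserving the real work for the ratio characterization.

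First I would establish the two endpoint identities, which are essentially definitional. Setting $\lambda = 1$ gives $R_1^\Phi(\mathcal{F}) = \max_{F \in \mathcal{F}}[\textnormal{OPT}(F) - \revenue(\Phi,F)] = \max_{F \in \mathcal{F}} \regret(\Phi,F)$, so minimizing over $\Phi$ yields $R_1^*(\mathcal{F}) = \min_{\Phi} \max_{F} \regret(\Phi,F) = \theta^*_\regret(\mathcal{F}) = \minimaxregret(\mathcal{F})$. Setting $\lambda = 0$ gives $R_0^\Phi(\mathcal{F}) = \max_{F}[-\revenue(\Phi,F)] = -\min_{F} \revenue(\Phi,F)$; pushing the outer minimization through the sign flip gives $R_0^*(\mathcal{F}) = -\max_{\Phi}\min_{F}\revenue(\Phi,F) = -\theta^*_\revenue(\mathcal{F})$, which is exactly $\maximinrevenue(\mathcal{F}) = -R_0^*(\mathcal{F})$.

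For the ratio claim, the core is the equivalence, valid for each fixed $\lambda \in [0,1]$, that $R_\lambda^*(\mathcal{F}) \le 0$ if and only if $\theta^*_\ratio(\mathcal{F}) \ge \lambda$. To prove this I would fix a mechanism $\Phi$ and observe that $\max_{F}[\lambda\,\textnormal{OPT}(F) - \revenue(\Phi,F)] \le 0$ holds if and only if $\revenue(\Phi,F) \ge \lambda\,\textnormal{OPT}(F)$ for every $F \in \mathcal{F}$, which, since $\textnormal{OPT}(F) > 0$, is equivalent to $\min_{F}\ratio(\Phi,F) \ge \lambda$. Taking $\Phi$ to be a maximin ratio optimal mechanism shows $\theta^*_\ratio \ge \lambda \Rightarrow R_\lambda^* \le 0$; conversely, any $\Phi$ witnessing $R_\lambda^* \le 0$ has ratio at least $\lambda$ on all of $\mathcal{F}$, so $\theta^*_\ratio \ge \lambda$. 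This step relies on the standing assumption that the optimal mechanisms exist, so that the relevant minimum and maximum are attained.

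Finally I would convert this equivalence into the stated ``largest $\lambda$'' characterization using monotonicity. Since $\textnormal{OPT}(F) \ge 0$, the map $\lambda \mapsto \lambda\,\textnormal{OPT}(F) - \revenue(\Phi,F)$ is nondecreasing for each $F$ and $\Phi$, hence $R_\lambda^*(\mathcal{F})$ is nondecreasing in $\lambda$ (a minimum over $\Phi$ of maxima over $F$ of nondecreasing functions), so $\{\lambda : R_\lambda^*(\mathcal{F}) \le 0\}$ is a subinterval of $[0,1]$ containing $0$. The equivalence identifies this set as $\{\lambda : \lambda \le \theta^*_\ratio\} = [0, \theta^*_\ratio]$, where I also note $\theta^*_\ratio \le 1$ because $\revenue(\Phi,F) \le \textnormal{OPT}(F)$ always forces every ratio to be at most $1$; its largest element is precisely $\theta^*_\ratio = \maximinratio(\mathcal{F})$. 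The only point needing care, and the closest thing to an obstacle, is the boundary: I must verify that the supremum is attained, i.e. $R_{\theta^*_\ratio}^*(\mathcal{F}) \le 0$, which follows from the forward direction of the equivalence with $\lambda = \theta^*_\ratio$, and that dividing by $\textnormal{OPT}(F)$ is legitimate, i.e. $\textnormal{OPT}(F) > 0$ throughout $\mathcal{F}$, which holds whenever the support contains positive values.
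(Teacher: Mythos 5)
Your proof is correct. Note that the paper itself offers no proof of this proposition --- it is imported by citation from prior work (\citep{our-paper-ec-a-b}) --- so there is nothing internal to compare against; your argument (direct substitution at $\lambda\in\{0,1\}$, plus the equivalence $R_\lambda^*(\mathcal{F})\le 0 \iff \theta^*_\ratio(\mathcal{F})\ge\lambda$ obtained by dividing by $\textnormal{OPT}(F)>0$) is exactly the natural one, and your two caveats (attainment of the optima and strict positivity of $\textnormal{OPT}(F)$ over $\mathcal{F}$) are consistent with the paper's standing assumptions, since it explicitly assumes the optimal mechanisms $\mechrevenue,\mechregret,\mechratio$ exist and works with uncertainty sets on which the benchmark is bounded away from zero.
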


The following proposition shows that we can compute the $\lambda$-regret of any mechanism and the minimax $\lambda$-regret via solving  infinite linear programs.
\begin{proposition}[$\lambda$-regret Linear Programs] \label{prop:minimax-lmbd-regret-lp}
Fix a class of distributions $\calF$ as in equation (\ref{eqn:uncertainty-set}) and a mechanism $\Phi$.  Its worst-case $\lambda$-regret $R_{\lambda}^{\Phi}(\mathcal{F})$ can be obtained from the following linear program
\begin{equation}
\begin{aligned}
    \min_{\theta, \alpha(\cdot), \beta(\cdot) } & \:  \theta \\
    \textnormal{ s.t. }  &  \theta \geq  \sum_{i \in \mathcal{I}} \alpha_{i}(p) m_i + \sum_{j \in \mathcal{J}} \beta_{j}(p) q_j  \quad \forall p \in \jaedit{\gridval} \\
    &  \lambda p \1(v \geq p) - \int_{s \leq v} s d\Phi(s)  - \sum_{i \in \mathcal{I} } \alpha_{i}(p) v^i - \sum_{j \in \mathcal{J}} \beta_{j}(p) \1(v \geq r_j) \leq 0 \quad \forall v, p \in \jaedit{\gridval}.
\end{aligned}
\label{eqn:lp-regret} \tag{$\mathcal{LP}^{\Phi}_{\lambda}$} 
\end{equation}

Furthermore, the minimax $\lambda$-regret value $R^*_{\lambda}(\mathcal{F})$  is given by the objective value of the following linear program
\begin{equation}
\begin{aligned}
    \min_{\substack{\Phi(\cdot), \theta\\ \alpha(\cdot), \beta(\cdot)} } & \: \theta  \\
   \textnormal{ s.t. } & \theta \geq  \sum_{i \in \mathcal{I}} \alpha_{i}(p) m_i + \sum_{j \in \mathcal{J}} \beta_{j}(p) q_j  \quad \forall p \in \jaedit{\gridval} \\
   &  \lambda p \1(v \geq p) - \int_{s \leq v} s d\Phi(s)  - \sum_{i \in \mathcal{I} } \alpha_{i}(p) v^i - \sum_{j \in \mathcal{J}} \beta_{j}(p) \1(v \geq r_j) \leq 0 \quad \forall v, p \in \jaedit{\gridval} \\
   &  \Phi \textnormal{ is a CDF,}
\end{aligned}
\label{eqn:lp-minimax} \tag{$\mathcal{LP}^{*}_{\lambda}$} 
\end{equation}
and the solution $\Phi^*$ of this linear program is a mechanism that achieves the optimal value $R^*_{\lambda}(\mathcal{F})$.
\end{proposition}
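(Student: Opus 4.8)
The plan is to prove both parts by exposing the inner worst-case-over-$F$ problem as a (semi-infinite) linear program in the measure $F$ and dualizing it. For the first part, I would start from the definition $R_\lambda^\Phi(\calF) = \max_{F \in \calF}[\lambda\,\textnormal{OPT}(F) - \revenue(\Phi,F)]$ and use $\textnormal{OPT}(F) = \max_{p \in \gridval} p\bar F(p)$ together with $\lambda \ge 0$ to write $\lambda\,\textnormal{OPT}(F) = \max_{p} \lambda p \bar F(p)$; since the two maximizations decouple, they can be exchanged:
\[
R_\lambda^\Phi(\calF) = \max_{p \in \gridval}\ \max_{F \in \calF}\Big[\lambda p \bar F(p) - \revenue(\Phi,F)\Big].
\]
Now the benchmark price $p$ is fixed inside the inner problem, so for each $p$ that optimization is linear in $F$: writing $g(v) := \int_{s \le v} s\, d\Phi(s)$ we have $\revenue(\Phi,F) = \int g(v)\, dF(v)$ and $\bar F(p) = \int \1(v\ge p)\, dF(v)$, while the defining conditions of $\calF$, namely $\int v^i\, dF = m_i$ and $\int \1(v \ge r_j)\, dF = q_j$, are linear equalities in $F$.

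Next I would dualize the inner moment problem for each fixed $p$. Treating $F \ge 0$ as the primal variable and the moment/quantile equalities as the coupling constraints, the dual assigns a free multiplier $\alpha_i(p)$ to the $i$-th moment constraint and $\beta_j(p)$ to the $j$-th quantile constraint, giving
\[
\min_{\alpha(p),\beta(p)}\ \sum_{i\in\mathcal{I}} \alpha_i(p) m_i + \sum_{j\in\mathcal{J}} \beta_j(p) q_j \quad\text{s.t.}\quad \sum_{i} \alpha_i(p) v^i + \sum_j \beta_j(p)\1(v\ge r_j) \ \ge\ \lambda p \1(v \ge p) - g(v)\ \ \forall v \in \gridval,
\]
whose feasibility condition is exactly the second constraint of \eqref{eqn:lp-regret} rearranged. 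To assemble the outer $\max_p$, I would introduce the epigraph variable $\theta$ with $\theta \ge \sum_i \alpha_i(p) m_i + \sum_j \beta_j(p) q_j$ for every $p$, which is the first constraint. The key structural point is that the dual multipliers $(\alpha(p),\beta(p))$ for distinct $p$ live in independent feasibility sets and each contributes to the objective only through its own $p$, so minimizing $\theta$ (the pointwise maximum over $p$ of these decoupled terms) equals the maximum over $p$ of the per-$p$ dual minima. Hence the value of \eqref{eqn:lp-regret} equals $\max_p \min_{\alpha(p),\beta(p)}(\cdots)$, which by strong duality equals $\max_p \max_F(\cdots) = R_\lambda^\Phi(\calF)$.

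For the second part, I would simply observe that $\Phi$ enters \eqref{eqn:lp-regret} only through the linear functional $g(v) = \int_{s\le v} s\, d\Phi(s)$, and that the requirement ``$\Phi$ is a CDF'' is itself a linear (convex) constraint. Therefore $R_\lambda^*(\calF) = \min_{\Phi} R_\lambda^\Phi(\calF) = \min_\Phi[\text{value of } \eqref{eqn:lp-regret}]$, and since $\min_\Phi \min_{\theta,\alpha,\beta} = \min_{\Phi,\theta,\alpha,\beta}$, this joint minimization is precisely \eqref{eqn:lp-minimax}; the optimizing $\Phi$ returned by the program is then a mechanism attaining $R_\lambda^*(\calF)$.

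The main obstacle is justifying strong duality — zero gap and dual attainment — for the inner moment problem, since that is what legitimizes replacing $\max_F$ by the dual minimum. When $\gridval$ is finite, which is the regime in which we compute exact values, the inner problem is an ordinary finite-dimensional LP, so strong duality holds as soon as it is feasible ($\calF \ne \emptyset$) and bounded (revenue is bounded because values lie in $[a,b]$), and both are immediate. For a continuum $\gridval$ one must instead invoke a moment-duality theorem and verify a Slater / relative-interior condition on the prescribed moments and quantiles to preclude a gap; one also checks, via standard extreme-point (Richter--Rogosinski type) arguments, that the worst-case $F$ is attained by a finitely supported distribution, which simultaneously guarantees attainment of the $\max_F$ and reconciles the continuous formulation with the finite one. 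I would present the finite case in detail and indicate the continuous extension.
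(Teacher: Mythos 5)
Your proposal is correct and follows essentially the same route as the paper's own proof: replace $\textnormal{OPT}(F)$ by a family of constraints indexed by the benchmark price $p$, dualize each per-$p$ inner maximization over $F$ with multipliers $\alpha_i(p),\beta_j(p)$, assemble via an epigraph variable $\theta$, and then absorb $\Phi$ as an additional variable for the minimax version. The only difference is that you are more explicit than the paper about justifying strong duality (finite-grid LP duality versus moment-duality with a Slater-type condition in the continuum), a point the paper's proof passes over silently.
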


Note that the condition that $\Phi$ is a CDF is a linear constraint. Note also that if $\gridval = \{v_0 < \dots <v_K\}$ is a finite set, then it is sufficient to optimize over $\phi_0,\dots,\phi_K$ such that $\Phi$ has weight $\phi_i$ on $v_i$ with $\phi_i \geq 0, \sum_{i \in [N]} \phi_i = 1$, and for $v = v_{k}$, we can write $\int_{s \leq v} s d\Phi(s) = \sum_{i=0}^{k} s \phi_i$. \eqref{eqn:lp-minimax} then becomes a linear program with $\Theta(K)$ variables and $\Theta(K^2)$ constraints. 

The proof of Proposition~\ref{prop:minimax-lmbd-regret-lp} relies on some transformations and linear programming duality. We describe the high level ideas for the result leading to \eqref{eqn:lp-minimax}.  We first note that  $\textnormal{OPT}(F) \ge \int_{\jaedit{\gridval}} \1(v \geq p) dF(v)$ for all $p \in \jaedit{\gridval}$ with the equality achieved at some $p$.  This, together with the epigraph formulation, leads to the fact that the minimax $\lambda$-regret is the value of the program
\begin{align*}
    \min_{\theta, \Phi(\cdot)} & \: \theta  \\
  \text{ s.t. } &  \theta \geq \max_{F \in \mathcal{F}} \left\{ \lambda p \int_{\jaedit{\gridval}} \1(v \geq p) dF(v) - \int \int_{s \leq v} s d\Phi(s) dF(v) \right\} \quad \forall p \in \jaedit{\gridval}\\
  &  \Phi \textnormal{ is a CDF}.
\end{align*}
We have replaced the $\opt$ term with a continuum of constraints indexed by $p$, which is more tractable. The next step is to convert the $\max_{F \in \mathcal{F}}$ term into a minimization problem through LP duality, enabling one to add the dual variables to the optimization problem while eliminating the minimization in the constraint (since the $\theta \geq \min$ constraint simply means there exists a set of dual variables that make the constraint feasible). We use the form of the uncertainty set $\mathcal{F}$ in an essential way here because both moment and quantile constraints are linear in the distribution variable $dF$, and so is the objective. The max problems are indexed by $p$, so if we let $\alpha_{i}(p)$ and $\beta_{j}(p)$ be the dual variables of the $i$'th moment and $j$'th quantile constraints of the $p$-problem, we get exactly the linear program in question. The full proof is given in Appendix~\ref{app:sec:lp-formulation}.

\paragraph{Remark.} The formulation and derivation of \eqref{eqn:lp-regret} and  \eqref{eqn:lp-minimax} is inspired by techniques in \citet{roels-perakis-newsvendor} in the context of minimax regret analysis for a newsvendor problem,   and are very close to those of \citet{shixin-simple-menus} in the context of pricing. We do not claim them as technical contributions in their own right.  
Rather, our main contribution is to unify different criteria and uncertainty sets, and define and analyze a \textit{quantitative notion of approximate optimality across many criteria}. We view these linear programming formulations as analytical tools for us to unify all three focal criteria.

\subsection{Cross-Criterion Performance} 

 We are particularly interested in how a mechanism that is robustly optimized for one criterion performs under another. That is, with a given uncertainty $\mathcal{F}$, we want to evaluate:
\begin{itemize}
    \item the worst-case regret and  ratio of a mechanism $\mechrevenue$ that achieves maximin revenue.
    \item the worst-case revenue and  ratio of a mechanism $\mechregret$ that achieves minimax regret.
    \item the worst-case revenue and regret of a mechanism $\mechratio$ that achieves maximin ratio.
\end{itemize}

 One way to go about this is to  compute an optimal mechanisms for one criterion, say $\mechrevenue$ for revenue, then use \eqref{eqn:lp-regret} to compute the worst case regret and ratio. But this approach has an issue. The evaluated performance will only be for a specific optimal mechanism, but this would not say anything about  other potentially optimal mechanisms. In particular poor performance under another criterion would not preclude that another optimal mechanism could perform well. We aim for a  result that applies to \textit{any} optimal mechanism.  Therefore, we instead use a bilevel approach. We have observed earlier that all criteria can be unified in a $\lambda$-regret framework, so we will use the $\lambda$-regret language throughout. For any $\lambda_\old \in [0,1]$, let  $r_\old$ denote the corresponding $\lambda_\old$-regret.  A mechanism $\Phi$ is an optimal mechanism for the previous ``old'' criterion if and only if $R_{\lambda_\old}^{\Phi}(\calF) \leq r_\old$.  To evaluate \textit{any} optimal mechanism against a ``new'' criterion characterized by $\lambda_{\new}$, we compute the minimax $\lambda_\new$-regret objective with the aforementioned constraint on $\Phi$. In other words, we want to solve
\begin{equation}
\begin{aligned}
    R_{\lambda_{\new}}^*(\mathcal{F},(\lambda_{\old},r_{\old})) = \min_{\Phi \in \mathcal{M}_\old}  R_{\lambda_\new}^{\Phi}(\calF),  \textnormal{ where } \mathcal{M}_\old = \left\{ \Phi :  R_{\lambda_\old}^{\Phi}(\calF) \leq r_\old \right\}.
\end{aligned}
\end{equation}
We call this quantity the cross regret. The following proposition shows that this problem can be written as a standard linear program.
\begin{proposition}[Linear Program for Cross Regret] \label{prop:across-lp}
Fix a class of distributions $\calF$ as in equation (\ref{eqn:uncertainty-set}),  $\lambda_{\new}, \lambda_{\old} \in [0,1]$ and $r_{old} \in \mathbb{R}$. The cross regret $R_{\lambda_{\new}}^*(\mathcal{F},(\lambda_{\old},r_{\old}))$ can be obtained through the following linear program \eqref{eqn:lp-across}.
\begin{equation}
\begin{aligned}
    \min_{\substack{\Phi,\theta\\ \alpha_\new, \beta_\new\\ \alpha_\old, \beta_\old }} & \: \: \:  \theta  
    \\
    \textnormal{ s.t. }  &  \theta \geq  \sum_{i \in \mathcal{I}} \alpha_{\new,i}(p) m_i + \sum_{j \in \mathcal{J}} \beta_{\new,j}(p) q_j  \quad \forall p \in \jaedit{\gridval} \\
    &  \lambda_{\new} p \1(v \geq p) - \int_{s \leq v} s d\Phi(s)  - \sum_{i \in \mathcal{I} } \alpha_{\new,i}(p) v^i - \sum_{j \in \mathcal{J}} \beta_{\new,j}(p) \1(v \geq r_j) \leq 0 \quad \forall v, p \in \jaedit{\gridval} \\
    &r_\old \geq  \sum_{i \in \mathcal{I}} \alpha_{\old,i}(p) m_i + \sum_{j \in \mathcal{J}} \beta_{\old,j}(p) q_j  \quad \forall p \in \jaedit{\gridval} \\
    &  \lambda_{\old} p \1(v \geq p) - \int_{s \leq v} s d\Phi(s)  - \sum_{i \in \mathcal{I} } \alpha_{\old,i}(p) v^i - \sum_{j \in \mathcal{J}} \beta_{\old,j}(p) \1(v \geq r_j) \leq 0 \quad \forall v, p \in \jaedit{\gridval} \\
   &  \Phi \textnormal{ is a CDF.}
\end{aligned}
\label{eqn:lp-across} \tag{$\mathcal{LP}^{*,(\lambda_{\old},r_{\old})}_{\lambda_{\new}}$}
\end{equation}

\end{proposition}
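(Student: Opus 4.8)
The plan is to peel the cross-regret definition open in two layers and then collapse the resulting nested minimizations into a single linear program. The starting point is Proposition~\ref{prop:minimax-lmbd-regret-lp}: for a fixed mechanism $\Phi$, the worst-case $\lambda$-regret $R_\lambda^\Phi(\calF)$ equals the value of the minimization program \eqref{eqn:lp-regret}, with decision variables $\theta$ and the dual multipliers $\alpha(\cdot),\beta(\cdot)$. Since $\gridval$ is finite, \eqref{eqn:lp-regret} is a finite-dimensional LP, so whenever it is feasible and bounded its optimal value is attained — a fact I will use twice.

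First I would handle the objective layer. Writing $R_{\lambda_\new}^\Phi(\calF)$ via \eqref{eqn:lp-regret} with $\lambda=\lambda_\new$ and multipliers $\alpha_\new,\beta_\new$, the outer problem $\min_{\Phi \in \mathcal{M}_\old} R_{\lambda_\new}^\Phi(\calF)$ becomes a minimization over $\Phi$ and $(\theta,\alpha_\new,\beta_\new)$ jointly, subject to the first two constraint blocks of \eqref{eqn:lp-across}, to $\Phi$ being a CDF, and to $\Phi \in \mathcal{M}_\old$. This reproduces the new-regret part of \eqref{eqn:lp-across} verbatim; it remains only to encode the membership constraint $\Phi \in \mathcal{M}_\old$ as linear constraints.

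The key step is the constraint layer. By definition $\Phi \in \mathcal{M}_\old$ iff $R_{\lambda_\old}^\Phi(\calF) \le r_\old$, and $R_{\lambda_\old}^\Phi(\calF)$ is itself the value of the minimization LP \eqref{eqn:lp-regret} at $\lambda=\lambda_\old$. For a minimization LP whose optimum is attained, the value is at most $r_\old$ if and only if there exist multipliers $(\alpha_\old,\beta_\old)$ that are feasible for the ``$\le 0$'' constraints of \eqref{eqn:lp-regret} (at $\lambda_\old$) and satisfy $\sum_{i \in \mathcal{I}} \alpha_{\old,i}(p) m_i + \sum_{j \in \mathcal{J}} \beta_{\old,j}(p) q_j \le r_\old$ for all $p \in \gridval$. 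These are precisely the third and fourth constraint blocks of \eqref{eqn:lp-across}. Hence $\Phi \in \mathcal{M}_\old$ is equivalent to the existence of $(\alpha_\old,\beta_\old)$ feasible for those two blocks.

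Finally I would collapse the two layers. Because $(\alpha_\old,\beta_\old)$ never appear in the objective $\theta$, minimizing over $\Phi$ subject to ``there exist feasible $(\alpha_\old,\beta_\old)$'' is identical to jointly minimizing over $(\Phi,\alpha_\old,\beta_\old)$ subject to feasibility of the old blocks. Merging this with the joint minimization from the objective layer yields a single minimization of $\theta$ over $(\Phi,\theta,\alpha_\new,\beta_\new,\alpha_\old,\beta_\old)$ subject to all four constraint blocks and the CDF constraint, which is exactly \eqref{eqn:lp-across}. The one point requiring care — and the main, if modest, obstacle — is the biconditional of the constraint layer: one must invoke attainment of the inner LP optimum (valid since $\gridval$ is finite) to pass cleanly between ``the minimized value is $\le r_\old$'' and ``some feasible multiplier vector achieves objective $\le r_\old$,'' and to confirm that promoting $(\alpha_\old,\beta_\old)$ to free variables neither relaxes nor tightens the membership constraint. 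Feasibility and boundedness of the combined program follow from the same considerations underlying Proposition~\ref{prop:minimax-lmbd-regret-lp}.
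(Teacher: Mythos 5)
Your proof is correct and matches the paper's own argument: the paper likewise invokes Proposition~\ref{prop:minimax-lmbd-regret-lp} to rewrite the worst-case $\lambda_\new$-regret objective and the $\mathcal{M}_\old$ membership constraint as minimization programs with dual variables $(\alpha_\new,\beta_\new)$ and $(\alpha_\old,\beta_\old)$, and then absorbs both into a single joint minimization. Your explicit attention to attainment of the inner LP optimum (so that ``value $\le r_\old$'' is equivalent to existence of feasible multipliers with objective $\le r_\old$) is a point the paper passes over silently, but it is the same proof.
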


Proposition~\ref{prop:across-lp} is derived by using \eqref{eqn:lp-regret} to convert the worst case $\lambda_\new$-regret problem in the objective and worst case $\lambda_\old$-regret problem in the $\mathcal{M}_\old$ constraint into minimization problems with dual variables $(\alpha_\new,\beta_\new)$ and $(\alpha_\old,\beta_\old)$ respectively. The resulting minimization problems can then  be absorbed into the overall optimization problem with additional variables. As before, if $\gridval$ is a finite set of size $K$, then \eqref{eqn:lp-across} is an LP with $\Theta(K)$ variables and $\Theta(K^2)$ constraints.

\paragraph{Procedure to evaluate cross-criteria performance.} The old and new criteria translate into linear program parameters as follows. If the old criterion is regret, we first compute $\theta^*_\regret$ using \eqref{eqn:lp-minimax} with $\lambda = 1$, and set $\lambda_\old = 1, r_\old = \theta^*_\regret$. If the old criterion is revenue, we first compute $\theta^*_\revenue $ using \eqref{eqn:lp-minimax} with $\lambda = 0$, and set $\lambda_\old = 0, r_\old = -\theta^*_\revenue$. If the old criterion is ratio, we first compute \eqref{eqn:lp-minimax} for different values of $\lambda$, and using a binary search, obtain $\theta^*_\ratio$; then we  set $\lambda_\old = \theta^*_\ratio, r_\old = 0$.  For the new criterion, one would set $\lambda_\new = 1$ for regret, $\lambda_\new = 0$ for revenue, and for ratio, one would conduct a binary search over $\lambda_\new$ until we get the value of $\lambda_\new$ such that the program \eqref{eqn:lp-across} evaluates to zero.

\subsection{Linear Programs for the  Best of Many Robustness Criteria}  We now give a linear programming formulation of the best of many criteria. To that end, we first slightly generalize the problem and ask: given a tuple $( \theta_\revenue, \theta_\regret,\theta_\ratio)$, whether there exists a mechanism $\Phi$ such that  
\begin{align*}
\min_{F \in \calF} \revenue(\Phi,F) &\geq \theta_\revenue \\
\max_{F \in \calF} \regret(\Phi,F) &\leq \theta_\regret \\
\min_{F \in \calF} \ratio(\Phi,F) &\geq \theta_\ratio  .  
\end{align*}

\noindent We will say that the mechanism $\Phi$ \textit{achieves} $( \theta_\revenue, \theta_\regret,\theta_\ratio)$ if the conditions are satisfied.

In terms of $\lambda$-regret, these conditions are equivalent to the conditions that $R_{0}^{\Phi}(\mathcal{F}) \leq -\theta_\revenue$, $R_{1}^{\Phi}(\mathcal{F}) \leq \theta_\regret$, and $R_{\theta_\ratio}^{\Phi}(\mathcal{F}) \leq 0$.
 By leveraging LP duality in an analogous way as in Proposition~\ref{prop:minimax-lmbd-regret-lp} to convert maximum over $F$ in each worst case regret expression to a minimum over dual variables, we obtain that characterizing the feasibility of $(\theta_\revenue, \theta_\regret,\theta_\ratio)$ is equivalent to the following linear feasibility problem.

\begin{proposition}\label{prop:rho-lp}
There exists a mechanism $\Phi$ that achieves $(\theta_\revenue, \theta_\regret,\theta_\ratio)$ if and only if the following linear problem in variables $\alpha_\revenue, \alpha_\regret, \alpha_\ratio,\beta_\revenue, \beta_\regret, \beta_\ratio,\Phi$ is feasible:

\begin{equation}
\begin{aligned}
   & \sum_{i \in \mathcal{I}} \alpha_{\revenue,i}(p) m_i + \sum_{j \in \mathcal{J}} \beta_{\revenue,j}(p) q_j \leq -\theta_\revenue  \quad \forall p \in \jaedit{\gridval} \\
   &  \sum_{i \in \mathcal{I}} \alpha_{\regret,i}(p) m_i + \sum_{j \in \mathcal{J}} \beta_{\regret,j}(p) q_j \leq \theta_\regret  \quad \forall p \in \jaedit{\gridval} \\ 
   &  \sum_{i \in \mathcal{I}} \alpha_{\ratio,i}(p) m_i + \sum_{j \in \mathcal{J}} \beta_{\ratio,j}(p) q_j \leq 0  \quad \forall p \in \jaedit{\gridval} \\
  &  - \int_{s \leq v} s d\Phi(s) - \sum_{i \in \mathcal{I} } \alpha_{\revenue,i}(p) v^i - \sum_{j \in \mathcal{J}} \beta_{\revenue,j}(p) \1(v \leq r_j) \leq 0 \quad \forall v, p \in \jaedit{\gridval} \\
   & p \1(v \ge p) - \int_{s \leq v} s d\Phi(s) - \sum_{i \in \mathcal{I} } \alpha_{\regret,i}(p) v^i - \sum_{j \in \mathcal{J}} \beta_{\regret,j}(p) \1(v \leq r_j) \leq 0 \quad \forall v, p \in \jaedit{\gridval}  \\
   & \theta_\ratio p \1(v \ge p) - \int_{s \leq v} s d\Phi(s) - \sum_{i \in \mathcal{I} } \alpha_{\ratio,i}(p) v^i - \sum_{j \in \mathcal{J}} \beta_{\ratio,j}(p) \1(v \leq r_j) \leq 0 \quad \forall v, p \in \jaedit{\gridval}  \\
   & \Phi \textnormal{ is a CDF,}
\end{aligned}
\label{eqn:lp-feasibility} \tag{$\mathcal{LP}$-\textnormal{multi}} 
\end{equation}
and if the above program is feasible,  a solution $\Phi$  achieves $( \theta_\revenue, \theta_\regret,\theta_\ratio)$.
\end{proposition}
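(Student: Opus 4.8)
The plan is to reduce the proposition to three independent applications of the $\lambda$-regret linear-programming characterization in Proposition~\ref{prop:minimax-lmbd-regret-lp}, glued together over a single shared mechanism variable $\Phi$. First I would record the equivalence already flagged in the text just before the statement: for a fixed $\Phi$, the three ``achieves'' conditions are equivalent to the three $\lambda$-regret bounds $R_0^{\Phi}(\calF) \le -\theta_\revenue$, $R_1^{\Phi}(\calF) \le \theta_\regret$, and $R_{\theta_\ratio}^{\Phi}(\calF) \le 0$. This is immediate from the definition $R_{\lambda}^{\Phi}(\calF) = \max_{F \in \calF}[\lambda \textnormal{OPT}(F) - \revenue(\Phi,F)]$: substituting $\lambda = 0$ turns the bound into $\min_{F} \revenue(\Phi,F) \ge \theta_\revenue$; substituting $\lambda = 1$ turns it into $\max_F \regret(\Phi,F) \le \theta_\regret$; and substituting $\lambda = \theta_\ratio$ and dividing through by $\textnormal{OPT}(F) > 0$ turns $\theta_\ratio\,\textnormal{OPT}(F) - \revenue(\Phi,F) \le 0$ for all $F$ into $\min_F \ratio(\Phi,F) \ge \theta_\ratio$.

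The second step is to rewrite each bound $R_{\lambda}^{\Phi}(\calF) \le r$ as a dual-certificate condition. Proposition~\ref{prop:minimax-lmbd-regret-lp} shows that, for fixed $\Phi$, the quantity $R_{\lambda}^{\Phi}(\calF)$ equals the optimal value of \eqref{eqn:lp-regret}, namely the minimum of $\theta$ over dual variables $\alpha(\cdot),\beta(\cdot)$ subject to $\theta \ge \sum_i \alpha_i(p) m_i + \sum_j \beta_j(p) q_j$ for all $p$ together with the pointwise feasibility constraints indexed by $(v,p)$. Since the only appearance of $\theta$ is as an upper bound on the moment/quantile expression, the optimal value is $\le r$ if and only if there exist $\alpha(\cdot),\beta(\cdot)$ satisfying the $(v,p)$-constraints for which $\sum_i \alpha_i(p) m_i + \sum_j \beta_j(p) q_j \le r$ for every $p$. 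Applying this to $(\lambda, r) = (0, -\theta_\revenue)$, $(1, \theta_\regret)$, and $(\theta_\ratio, 0)$ produces exactly the three pairs of constraint blocks appearing in \eqref{eqn:lp-feasibility}, with the three criteria contributing disjoint dual variables $(\alpha_\revenue,\beta_\revenue)$, $(\alpha_\regret,\beta_\regret)$, and $(\alpha_\ratio,\beta_\ratio)$.

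The final step is the gluing argument. Each of the three bounds is certified by its own dual block, but all three must be witnessed by the \emph{same} mechanism $\Phi$, which is precisely why we keep a single shared $\Phi$ variable (constrained only by ``$\Phi$ is a CDF,'' a constraint that does not depend on the criterion) alongside three independent dual blocks. Thus $\Phi$ achieves $(\theta_\revenue,\theta_\regret,\theta_\ratio)$ if and only if, for this $\Phi$, each of the three dual-certificate conditions is satisfiable, which in turn holds if and only if \eqref{eqn:lp-feasibility} is feasible; the $\Phi$ component of any feasible solution is then the desired mechanism, yielding the ``if feasible, $\Phi$ achieves'' conclusion. I expect the only genuinely delicate point to be bookkeeping rather than conceptual: one must observe that the worst-case distribution may differ across the three criteria — hence the need for three separate dual blocks — while the CDF constraint couples them through the common $\Phi$, and that treating $\lambda = \theta_\ratio$ as a fixed coefficient (we are testing a prescribed tuple, not optimizing over $\theta_\ratio$) keeps every constraint linear. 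With those observations in place, the equivalence is a direct assembly of three instances of \eqref{eqn:lp-regret} and requires no new duality beyond Proposition~\ref{prop:minimax-lmbd-regret-lp}.
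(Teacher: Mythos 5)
Your proposal is correct and takes essentially the same route as the paper: the paper's own justification is precisely the paragraph preceding the proposition, which translates the three ``achieves'' conditions into the $\lambda$-regret bounds $R_{0}^{\Phi}(\calF) \leq -\theta_\revenue$, $R_{1}^{\Phi}(\calF) \leq \theta_\regret$, $R_{\theta_\ratio}^{\Phi}(\calF) \leq 0$, and then invokes the duality of Proposition~\ref{prop:minimax-lmbd-regret-lp} criterion-by-criterion, with a single shared CDF variable $\Phi$ and three disjoint dual blocks. Your write-up is in fact more explicit than the paper's, since you spell out the dual-certificate equivalence (eliminating $\theta$ from \eqref{eqn:lp-regret}) and the gluing step that the paper leaves implicit.
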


We can use \eqref{eqn:lp-feasibility} to compute the multi-criteria robust approximation factor $c^*(\mathcal{F})$ of a given uncertainty set $\mathcal{F}$, using the representation \eqref{eqn:relperf-decoupled} (cf. Proposition \ref{prop:relperf-equivalent}), as follows. First, using $\mathcal{F}$ and \eqref{eqn:lp-minimax}, we compute the maximin revenue $\theta^*_\revenue$, the minimax regret $\theta^*_\regret$,  and maximin ratio $\theta^*_\ratio$. By the definition of $c^*(\mathcal{F})$, for each value $c \in [0,1]$ the tuple $(\theta_\revenue, \theta_\regret,\theta_\ratio) = (c \cdot \theta^*_\revenue,\theta^*_\regret/c, c \cdot \theta^*_\ratio)$ is feasible if and only if $c \leq c^*(\mathcal{F})$. For each $c$, we can check the feasibility of $c$ with (\ref{eqn:lp-feasibility}), so we can compute $c^*(\mathcal{F})$, the largest possible feasible value of $c$, up to any desired precision by binary search on $c \in [0,1]$.  If $\gridval$ is a finite set of size $K$, then \eqref{eqn:lp-feasibility} is a linear feasibility program with $\Theta(K)$ variables and $\Theta(K^2)$ constraints.

\section{Performance of Focal Mechanisms Across Robustness Criteria} \label{sec:across}

We now evaluate the relative performance of each of the mechanisms $\mechregret, \mechrevenue, \mechratio$ against different criteria. For support $[a,b]$, we set $\gridval$ to be the values $a + i (b-a)/K$ for $i \in [K]$, with $K = 100$. To initially make the insights clear and compact, we will be interested in not just a specific uncertainty set, but a class of uncertainty sets.  We normalize the (known) support upper bound \jaedit{$b$} to 1. We denote each class of uncertainty sets as follows. Let
\begin{align*}
    \calFmu &\textnormal{ be the set of distributions with support $[0,1]$ and mean $\mu$} \\
    \calFmusigma &\textnormal{ be the set of distributions with support $[0,1]$, mean $\mu$, and variance $\sigma^2$} \\
    \calFmedian &\textnormal{ be the set of distributions with support $[0,1]$ and median $\nu$}. \\
    \calFa &\textnormal{ be the set of distributions with support $[a,1]$}.
\end{align*}

These four classes of uncertainty sets correspond to  cases that have been studied in previous papers, as discussed in \S\ref{sec:lit}.

Take $\calFmu$ as an example. For each $\mu$, we can evaluate the upper bound of how any mechanism optimized against one criterion, say regret $\mechregret = \mechregret(\calFmu)$, performs against another criterion, say revenue $\relperf(\mechregret(\calFmu),\revenue,\calFmu)$. This quantity is a function of $\mu$. \jaedit{By using \eqref{eqn:lp-across}, we can compute the best performance \jaedit{(i.e. lowest possible $\lambda$-regret)} for any such optimal $\mechregret$.} If we want to make a statement specific to an uncertainty set, this quantity is sufficient. However, in order to make a \textit{general} statement about  how a regret-optimal mechanism performs on other criteria with mean information, we will evaluate performance across instances (values of $\mu$). Let $\gridparam = \jaedit{\{0.1, 0.2, \dots, 0.9\}}$. We will evaluate the worst case performance when the unknown parameter belongs to the grid $\gridparam$,  $\min_{\mu \in \gridparam} \relperf(\mechrevenue(\calFmu),\regret,\calFmu)$.

While it is clear that a mechanism that is optimal with respect to one criterion will not necessarily be optimal with respect to a different robustness criterion, \emph{what is unclear is how suboptimal} will the initial mechanism be for an alternative criterion. The next set of results quantifies how the three focal mechanisms perform when evaluated under alternative criteria, when focusing on different uncertainty structures. We report all numerical results with two decimal points.

\begin{proposition}[Mean Information]\label{prop:across-mean}
Consider the setting with uncertainty set given by $\mathcal{F}_{\mu}$.
\begin{enumerate}

\item Any maximin revenue optimal mechanism $\mechrevenue(\calFmu)$ 
satisfies
\begin{align*}
\min_{\mu \in \gridparam} \relperf(\mechrevenue(\calFmu), \all, \mathcal{F}) = 0.58.
\end{align*}
\item Any minimax regret optimal mechanism $\mechregret(\calFmu)$ satisfies
\begin{align*}
\min_{\mu \in \gridparam} \relperf(\mechregret(\calFmu), \all, \mathcal{F}) = 0.44.
\end{align*}

\item Any maximin ratio optimal mechanism $\mechrevenue(\calFmu)$ satisfies
\begin{align*}
\min_{\mu \in \gridparam} \relperf(\mechrevenue(\calFmu), \all, \mathcal{F}) = 0.68.
\end{align*}

\end{enumerate}    
\end{proposition}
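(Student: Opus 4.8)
The plan is to treat this as a computational result built entirely on the linear-programming machinery of Section~\ref{sec:lp-formulation}, specialized to mean information (so $\mathcal{I} = \{0,1\}$, $\mathcal{J} = \emptyset$, and support $[0,1]$). First I would work with the finite grid $\gridval$ of $K=100$ equally spaced values in $[0,1]$, so that every optimization over distributions $F$ and over mechanisms $\Phi$ becomes a genuine finite LP; the reductions in Propositions~\ref{prop:minimax-lmbd-regret-lp}, \ref{prop:across-lp}, and \ref{prop:rho-lp} guarantee that each quantity of interest is the value of an LP with $\Theta(K)$ variables and $\Theta(K^2)$ constraints. For each fixed $\mu$ in the grid $\gridparam = \{0.1,\dots,0.9\}$, I would first solve \eqref{eqn:lp-minimax} to obtain the three optimal robust values: $\theta^*_\revenue(\calFmu) = -R_0^*$, $\theta^*_\regret(\calFmu) = R_1^*$, and $\theta^*_\ratio(\calFmu)$ via the binary search on $\lambda$ described in Proposition~\ref{prop:minimax-regret-exp}.

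The crux is to evaluate, for each focal mechanism, the best relative performance across all criteria that \emph{any} mechanism optimal for the corresponding criterion can achieve. Using the decoupled representation \eqref{eqn:relperf-decoupled}, a mechanism attains $\relperf(\Phi,\all,\calFmu) \ge c$ exactly when it achieves the tuple $(c\,\theta^*_\revenue,\ \theta^*_\regret/c,\ c\,\theta^*_\ratio)$ in the sense of Proposition~\ref{prop:rho-lp}. To treat the revenue-optimal case, I would pin the revenue component to its optimal level by fixing $\theta_\revenue = \theta^*_\revenue$ in \eqref{eqn:lp-feasibility} — this constraint says precisely that $\Phi$ is maximin-revenue optimal — and then set $\theta_\regret = \theta^*_\regret/c$ and $\theta_\ratio = c\,\theta^*_\ratio$. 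The largest $c$ for which this feasibility LP admits a solution, located by binary search on $c \in [0,1]$, equals the maximum of $\relperf(\Phi,\all,\calFmu)$ over all revenue-optimal $\Phi$, since for such $\Phi$ the revenue term is automatically $1$ and the binary search enforces $\min$ of the regret and ratio terms $\ge c$. The regret and ratio cases are symmetric: pin $\theta_\regret = \theta^*_\regret$ (resp.\ $\theta_\ratio = \theta^*_\ratio$) and binary-search on the remaining two factors. Taking the minimum of these per-$\mu$ values over $\gridparam$ yields the three reported numbers $0.58$, $0.44$, $0.68$ (rounded to two decimals as elsewhere in the paper).

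The main conceptual obstacle — and the reason the feasibility LP rather than a single solved mechanism is needed — is the quantifier ``any optimal mechanism.'' A given criterion typically admits a whole face of optimal mechanisms, so evaluating one arbitrary optimizer would be meaningless; instead the bilevel formulation optimizes over the entire set of old-criterion optima and reports the most charitable one. Thus each claimed value is simultaneously an \emph{upper bound} on the attainable cross-criteria performance of the focal mechanism and is \emph{achieved} by some optimal mechanism, which is what makes the negative message sharp: even the best-behaved revenue-optimal mechanism is capped at $0.58$ at the worst $\mu$. The remaining work is computational bookkeeping: solving, for each of the nine values of $\mu$, the sequence of feasibility LPs generated by the binary search and confirming convergence to the claimed two-decimal values. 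Because the uncertainty set and mechanism class are defined directly on the finite grid $\gridval$, each LP is exact and no separate discretization-error analysis is required. I expect the genuine difficulty to lie entirely in the scale and correctness of this computation rather than in any single analytic step.
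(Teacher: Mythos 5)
Your LP setup is sound, and you correctly isolate the key difficulty — the quantifier ``any optimal mechanism,'' which must be handled by optimizing over the whole set $\mathcal{M}_\old$ of old-criterion optima rather than by evaluating one arbitrary solved optimizer. However, there is a genuine gap between what your procedure computes and what the proposition's numbers are. By pinning $\theta_\revenue = \theta^*_\revenue$ in \eqref{eqn:lp-feasibility} and binary-searching on a \emph{single} $c$ that scales the regret and ratio components simultaneously, your largest feasible $c$ equals $\max_{\Phi \in \mathcal{M}_\old} \min\left\{ \relperf(\Phi,\regret,\calFmu), \relperf(\Phi,\ratio,\calFmu) \right\}$, i.e.\ the best \emph{simultaneous} cross-criteria performance achievable by a single revenue-optimal mechanism. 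The paper computes something different: for each (old, new) pair \emph{separately}, it solves the bilevel cross-regret LP \eqref{eqn:lp-across}, obtaining $\max_{\Phi \in \mathcal{M}_\old} \relperf(\Phi,\textnormal{crit},\calFmu)$ one criterion at a time, and the proposition's values are the minima over criteria (and over $\mu$) of these pairwise quantities — they are precisely the row minima of Table~\ref{table:WC-Rel-robust-pricing-F-mu}: $0.58 = \min\{1.00, 0.58, 0.75\}$, $0.44 = \min\{0.45, 1.00, 0.44\}$, $0.68 = \min\{0.68, 0.93, 1.00\}$. By the max--min inequality your quantity is only $\leq$ the paper's: the revenue-optimal mechanism attaining the pairwise optimum against regret need not be the one attaining it against ratio, and nothing in your argument shows that a single revenue-optimal mechanism attains both. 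Hence your binary search may converge to values strictly below $0.58$, $0.44$, $0.68$, and your final claim that it ``yields the three reported numbers'' is unjustified.

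To be clear about the consequence: your quantity is still a valid upper bound on the all-criteria performance of any old-criterion-optimal mechanism (indeed a weakly tighter one), so the qualitative negative message would survive; but it does not establish the equalities as stated, since the ``$=$'' in the proposition refers to the quantity computed through \eqref{eqn:lp-across}. To match the paper's proof you should, for each $\mu$ and each old criterion, run \eqref{eqn:lp-across} three times — $\lambda_\new = 0$ for revenue, $\lambda_\new = 1$ for regret, and a binary search on $\lambda_\new$ for ratio, with $(\lambda_\old, r_\old)$ set to $(0,-\theta^*_\revenue)$, $(1,\theta^*_\regret)$, or $(\theta^*_\ratio,0)$ as appropriate — convert each to a relative performance, and then take minima over criteria and over $\mu \in \gridparam$. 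If you prefer to keep your joint feasibility computation, you would additionally have to verify (numerically, instance by instance) that the max--min value coincides with the min of the pairwise maxima; absent that verification, the stated equalities remain unproven under your approach.
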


Note that this is a negative result in the sense that we give an upper bound on the performance for \textit{any} ctiterion-specific optimal mechanism that is tailored to the value of $\mu$, using \eqref{eqn:lp-across}. We observe that, across instances,  a maximin revenue optimal mechanism can guarantee at most 58\% relative performance across robustness criteria, a minimax regret optimal mechanism can guarantee at most 44\%, and a maximin ratio optimal mechanism at most 68\%. This highlights that all mechanisms performance are fragile in this setting. 
We provide a more granular picture of  all cases in Table \ref{table:WC-Rel-robust-pricing-F-mu}, where we present all nine combinations of mechanism and criterion of evaluation.

\begin{table}[h!]
\begin{center}
\begin{tabular}{c c || c | c | c } 
 & & \multicolumn{3}{c}{Evaluated Criterion} \\
  \cline{3-5}
 \textbf{Mean} & & revenue & regret & ratio  \\ [0.5ex] 
 \hline
 \hline
Mechanism & revenue & 1.00  & 0.58 &  0.75 \\
 \cline{3-5}
Criterion  & regret & 0.45 & 1.00 &  0.44 \\
 \cline{3-5}
  & ratio & 0.68  &  0.93     & 1.00 \\
 \hline
\end{tabular}
\end{center}
\caption{The uncertainty set is known mean information $\calFmu$. Each cell is $\min_{\mu \in \gridparam} \relperf(\Phi^*_{\textnormal{MechanismCriterion}}(\calFmu), \textnormal{EvaluatedMetric}, \calFmu)$, the performance of a mechanism optimized for one criterion (row) when evaluated under  another criterion (column).}
\label{table:WC-Rel-robust-pricing-F-mu}
\end{table}

We observe that, under the uncertainty set $\calFmu$, the minimax regret mechanism performs poorly against the other two criteria (44\% for ratio and 45\% for regret). The maximin revenue mechanism performs reasonably under the ratio criterion (75\%), but not as well under the regret criterion (58\%). The maximin ratio mechanism performs very well under the regret criterion at 93\%, but the performance degrades at 68\% for the regret criterion.

Similar results can be obtained for the other uncertainty sets under consideration.
We present the results  for $\calFmusigma$ in Table \ref{table:WC-Rel-robust-pricing-F-mu-sigma},  for $\calFmedian$ in Table \ref{table:WC-Rel-robust-pricing-F-q}, and for $\calFa$ in Table \ref{table:WC-Rel-robust-pricing-F-a}. We observe across settings that focal mechanisms can fare quite poorly when evaluated against alternative classical robustness criteria. We also note that across cases, minimax regret mechanisms seem to perform particularly poorly against the other two criteria, compared to the maximin revenue and maximin ratio mechanisms. This is because the minimax regret rule often prescribes to put no mass on low values, a feature that adversarial Nature can take advantage of for other criteria, leading to low performance. It is also important to note that while minimax regret mechanisms might perform worse in the \textit{worst case over a large set of instances} (as shown in this section), this is not true for every instance. In instances with known high mean and fixed variance or with known high median, the minimax regret mechanism leads to a higher guarantee than the two other focal mechanisms (cf. Figure~\ref{fig:relperf-by-param-all}).

\begin{table}[h!]
\begin{center}
\begin{tabular}{c c || c | c | c } 
 & & \multicolumn{3}{c}{Evaluated Criterion} \\
  \cline{3-5}
 \textbf{Mean \& Variance} & & revenue & regret & ratio  \\ [0.5ex] 
 \hline
 \hline
Mechanism & revenue & 1.00  & 0.51 &  0.67 \\
 \cline{3-5}
Criterion  & regret & 0.49 & 1.00 &  0.58 \\
 \cline{3-5}
  & ratio & 0.78  &  0.71     & 1.00 \\
 \hline
\end{tabular}
\end{center}
\caption{The uncertainty set is known first two moments (mean and variance) information $\calFmusigma$. Each cell is $\min_{\mu,\sigma \in \gridparam} \relperf(\Phi^*_{\textnormal{MechanismCriterion}}(\calFmusigma), \textnormal{EvaluatedMetric}, \calFmusigma)$, the performance of a mechanism optimized for one criterion (row) when evaluated under  another criterion (column).}
\label{table:WC-Rel-robust-pricing-F-mu-sigma}
\end{table}

\begin{table}[h!]
\begin{center}
\begin{tabular}{c c || c | c | c } 
 & & \multicolumn{3}{c}{Evaluated Criterion} \\
  \cline{3-5}
 \textbf{Median} & & revenue & regret & ratio  \\ [0.5ex] 
 \hline
 \hline
Mechanism & revenue & 1.00  & 0.41 &  0.34 \\
 \cline{3-5}
Criterion & regret & 0.00 & 1.00 &  0.00 \\
 \cline{3-5}
  & ratio & 0.41  &  0.52     & 1.00 \\
 \hline
\end{tabular}
\end{center}
\caption{The uncertainty set is known median information $\calFmedian$. Each cell is $\min_{\nu \in \gridparam} \relperf(\Phi^*_{\textnormal{MechanismCriterion}}(\calFmedian), \textnormal{EvaluatedMetric}, \calFmedian)$, the performance of a mechanism optimized for one criterion (row) when evaluated under  another criterion (column).}
\label{table:WC-Rel-robust-pricing-F-q}
\end{table}

\begin{table}[h!]
\begin{center}
\begin{tabular}{c c || c | c | c } 
 & & \multicolumn{3}{c}{Evaluated Criterion} \\
  \cline{3-5}
 \textbf{Lower Bound} & & revenue & regret & ratio  \\ [0.5ex] 
 \hline
 \hline
Mechanism & revenue & 1.00  & 0.41 &  0.33 \\
 \cline{3-5}
Criterion  & regret & 0.00 & 1.00 &  0.00 \\
 \cline{3-5}
  & ratio & 0.31  &  0.53     & 1.00 \\
 \hline
\end{tabular}
\end{center}
\caption{The uncertainty set is known lower bound information $\calFa$. Each cell is $\min_{a \in \gridparam} \relperf(\Phi^*_{\textnormal{MechanismCriterion}}(\calFa), \textnormal{EvaluatedMetric}, \calFa)$, the performance of a mechanism optimized for one criterion (row) when evaluated under  another criterion (column).}
\label{table:WC-Rel-robust-pricing-F-a}
\end{table}

\section{The Best of Many Criteria: Results and Discussions}\label{sec:best-results}

\subsection{Achievable Performance}

As seen in \S\ref{sec:across}, all three focal mechanisms are not robust across criteria. In this section, we explore the question of whether there exists a mechanisms that can perform well across the three criteria.  Recall that for a given uncertainty set of distributions $\mathcal{F}$, we denote by $c^*(\mathcal{F})$, defined in \eqref{eqn:c_star}, the highest fraction of the three criteria achievable across all mechanisms.

\begin{theorem}[Robust Pricing: Best of Many Criteria]\label{thm:factor-c-lb} 
For each uncertainty set $\calF$ below, we have the following (uniform across instances) lower bound on $c^*(\calF)$.
\begin{enumerate}
\item (Mean Information) For any $\mu \in \gridparam$, $ c^*(\calFmu) \ge 0.92$.

\item (Mean and Variance Information) For any  $\mu,  \sigma \in \gridparam$ such that $\calFmusigma \neq \emptyset$, $c^*( \calFmusigma) \ge 0.86$.

\item (Median Information) For any  $\nu \in \gridparam$, $c^*(\calFmedian) \ge 0.61$.

\item (Lower Bound Information)  For any  $a \in \gridparam$, $ c^*(\calFa) \ge 0.58$.

\end{enumerate}

\end{theorem}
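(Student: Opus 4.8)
The plan is to reduce each of the four lower bounds to a finite collection of linear-program feasibility checks, one per instance in the relevant grid, using the machinery already assembled in Section~\ref{sec:lp-formulation}. Recall from the discussion following Proposition~\ref{prop:rho-lp} that, for a fixed uncertainty set $\calF$, the multi-criterion factor $c^*(\calF)$ is the largest $c\in[0,1]$ for which the tuple $(c\,\theta^*_\revenue,\ \theta^*_\regret/c,\ c\,\theta^*_\ratio)$ is achievable, and that achievability of a tuple is \emph{exactly} feasibility of \eqref{eqn:lp-feasibility} (via the representation \eqref{eqn:relperf-decoupled}). Hence, to establish a bound of the form $c^*(\calF)\ge c_0$, it suffices to exhibit, for the given instance, a single feasible point of \eqref{eqn:lp-feasibility}---a mechanism $\Phi$ together with dual multipliers $\alpha,\beta$---at the target $(c_0\,\theta^*_\revenue,\ \theta^*_\regret/c_0,\ c_0\,\theta^*_\ratio)$. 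Because $\gridval$ is the finite grid with $K=100$ points, every program involved has $\Theta(K)$ variables and $\Theta(K^2)$ constraints, so this is a finite computation for each instance.

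Concretely, I would proceed instance by instance. First, for each parameter value in $\gridparam$ (and, in the mean-and-variance case, for each admissible pair $(\mu,\sigma)$ with $\calFmusigma\neq\emptyset$), compute the three single-criterion optima: $\theta^*_\revenue=-R^*_0(\calF)$ and $\theta^*_\regret=R^*_1(\calF)$ directly from \eqref{eqn:lp-minimax}, and $\theta^*_\ratio$ as the largest $\lambda$ with $R^*_\lambda(\calF)\le 0$ by binary search, exactly as prescribed by Proposition~\ref{prop:minimax-regret-exp}. Second, fix $c_0$ to be the claimed constant ($0.92$, $0.86$, $0.61$, $0.58$ for the four classes respectively), and certify feasibility of \eqref{eqn:lp-feasibility} with the scaled targets above, recording the explicit $\Phi$ and multipliers as the certificate; note that the multipliers $\alpha_\revenue,\beta_\revenue,\alpha_\regret,\beta_\regret,\alpha_\ratio,\beta_\ratio$ in \eqref{eqn:lp-feasibility} are themselves the dual certificates that the chosen $\Phi$ attains the required worst-case revenue, regret, and ratio. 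Third, since the statement is uniform over the grid, I would repeat this for every instance and take the worst case; the minimum over the grid both proves the stated lower bound and pins down the binding instance. The finer per-instance remarks in the introduction (e.g.\ the median cases yielding $0.93$, $0.98$, $0.93$) are obtained by the same certificate at the correspondingly higher $c_0$.

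The main obstacle is rigor of the numerics rather than any conceptual difficulty: the guarantees are ratios whose denominators $\theta^*_\revenue,\theta^*_\regret,\theta^*_\ratio$ are themselves LP optima, so a raw floating-point solution is not automatically a proof. Feasibility of \eqref{eqn:lp-feasibility} already certifies the \emph{numerators} (the chosen mechanism's worst-case quantities) through its dual multipliers, so the remaining task is to certify the three denominators in the direction that can only hurt the bound---an \emph{upper} bound on $\theta^*_\revenue$ and on $\theta^*_\ratio$, and a \emph{lower} bound on $\theta^*_\regret$---so that a feasible point at the scaled targets implies $c^*\ge c_0$ despite rounding. Each such bound is supplied by a matching primal--dual certificate: weak duality for the min-LP \eqref{eqn:lp-minimax} lower-bounds $\theta^*_\regret$, and an explicit Nature (worst-case distribution) strategy upper-bounds $\theta^*_\revenue$ and forces $R^*_\lambda>0$ just above the claimed $\theta^*_\ratio$. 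Secondary bookkeeping includes verifying nonemptiness of $\calFmusigma$ (the variance must be compatible with support $[0,1]$ and mean $\mu$) before admitting a pair, and controlling the binary-search tolerances for both $\theta^*_\ratio$ and $c^*$ so that the reported digits are guaranteed. None of these steps is individually hard, but together they are precisely what turns the numerical observations in Table~\ref{table:relperf-all-vs-focal-intro} into the statement of Theorem~\ref{thm:factor-c-lb}.
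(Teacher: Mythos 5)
Your proposal matches the paper's own approach: the theorem is established exactly as you describe, by computing the single-criterion optima $\theta^*_\revenue,\theta^*_\regret,\theta^*_\ratio$ via \eqref{eqn:lp-minimax} (with binary search for the ratio), then certifying feasibility of \eqref{eqn:lp-feasibility} at the scaled targets $(c\,\theta^*_\revenue,\ \theta^*_\regret/c,\ c\,\theta^*_\ratio)$ for each instance in the finite grid $\gridparam$ over the $K=100$-point value grid, and taking the worst case. Your added remarks on certifying the denominators in the harmless direction and on checking nonemptiness of $\calFmusigma$ are sensible bookkeeping that the paper leaves implicit, but they do not change the argument.
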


In other words, for any known mean in $\gridparam$, there exists a mechanism that can ensure at least 92\% of the best performance achievable for any of the three robust criteria. For any known mean and variance, we get an 86\% guarantee; for median, 61\%, and for lower bound, 58\%. Note that $c^*(\calF) = \relperf(\mechall, \all, \calF)$. It is important to put these numbers in perspective by comparing with the performance $\relperf(\Phi, \all, \calF)$ of focal mechanisms $\Phi \in \{\mechrevenue, \mechregret, \mechratio\}$ from Section~\ref{sec:across}. 
\begin{itemize}
    \item With mean information, the best focal mechanism is $\mechratio$ with relative performance of 68\% compared with the best of 92\% ($\mechrevenue$ and $\mechregret$ are worse at $58\%$ and $44\%$).
    \item With mean and variance information, the best focal mechanism is $\mechratio$ with relative performance of 71\% compared with the best of 86\% ($\mechrevenue$ and $\mechregret$ are worse at $51\%$ and $49\%$).
    \item With median information, the best one can obtain, at 61\%, is quite large compared to the best focal mechanism $\mechratio$ at 41\% ($\mechrevenue$ and $\mechregret$ are worse at 34\% and 0\%.)
    \item With lower bound information, the best one can obtain, at 58\% is quite large compared to the best focal mechanism $\mechrevenue$ at 33\% ($\mechratio$ and $\mechregret$ are worse at 31\% and 0\%.)
    
\end{itemize}

\paragraph{Remark.}     It might seem surprising that the guarantee for mean and variance (86\%) is lower than that for mean alone (92\%) even though the former has strictly more information. In fact, it is possible for some $\mu,\sigma$ to have $c^*(\calFmusigma) < c^*(\calFmu)$ because when we add more information and $\calF$ becomes smaller, there are two effects, as seen in the definition \eqref{eqn:relperf-all-def}: (i) we minimize over a smaller uncertainty set $F \in \calF$, which makes the relative performance \textit{larger} (ii) the \textit{benchmarks} that we compare against become more competitive, i.e. $\theta^*_\revenue(\calF)$ and $\theta^*_\ratio(\calF)$ increase and $\theta^*_\regret(\calF)$ decreases, which makes the relative performance \textit{smaller}. Of course, in the extreme case when there is no uncertainty and $\calF$ is a singleton, $c^*(\calF)$ takes the maximum value of 1, but in between, either (i) or (ii) can dominate. 

\subsection{Mechanisms and Performance Across Instances}

\paragraph{Performance across instances.} While the result above highlights that one may improve the worst case factor $c^*(\calF)$, across a large number of instances $\calF$, next we explore the achievable across instances for the four classes of uncertainty sets we analyze. For $\calFmu$ we plot, as a function of $\mu$, the relative performance $\relperf(\Phi, \all, \calFmu)$ for the best mechanism $\Phi = \mechall$ as well as the focal mechanisms $\Phi \in \{\mechrevenue, \mechregret, \mechratio\}$. 
We give similar plots for the other 3 classes of uncertainty sets, showing the relative performance of each mechanism as a function of the key parameter of that class: $\calFmusigma$ as a function of $\mu$ with fixed $\sigma = 0.20$ (so the plot is two-dimensional), $\calFmedian$ as a function of $\nu$, and $\calFa$ as a function of $a$. All plots show the lower bound on the performance for $\mechall$ and the upper bounds on the performances of all other mechanisms, to emphasize the substantial provable gap between $\mechall$ and all other mechanisms.  Figure~\ref{fig:relperf-by-param-all} presents these results.
\begin{figure}[h!]
\captionsetup{justification=centering}
	\begin{subfigure}{0.48\linewidth}
    	\centering
    	\includegraphics[height=0.75\linewidth]{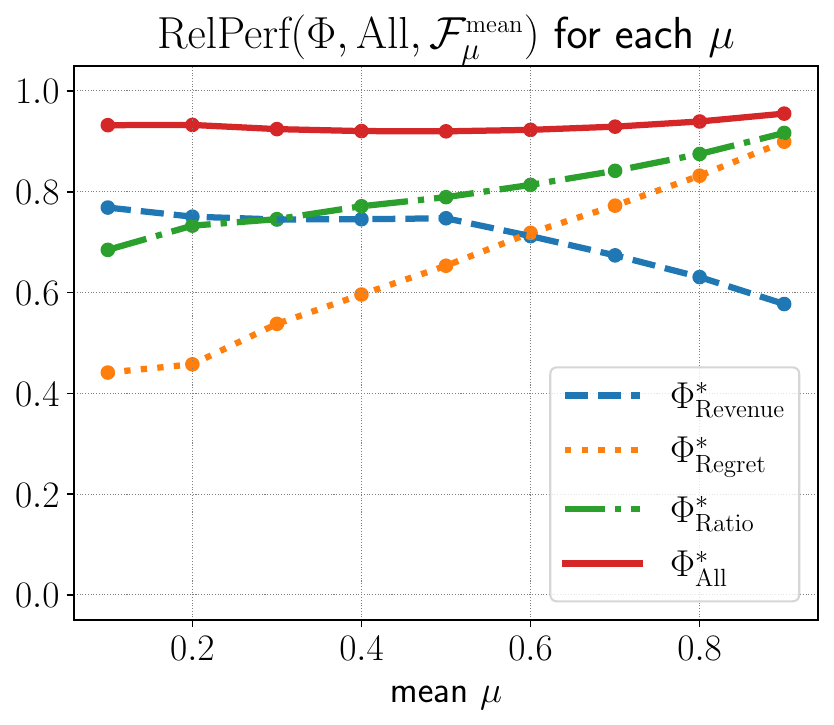}

    \end{subfigure}%
	\begin{subfigure}{0.48\linewidth}

		\centering
		\includegraphics[height=0.75\linewidth]{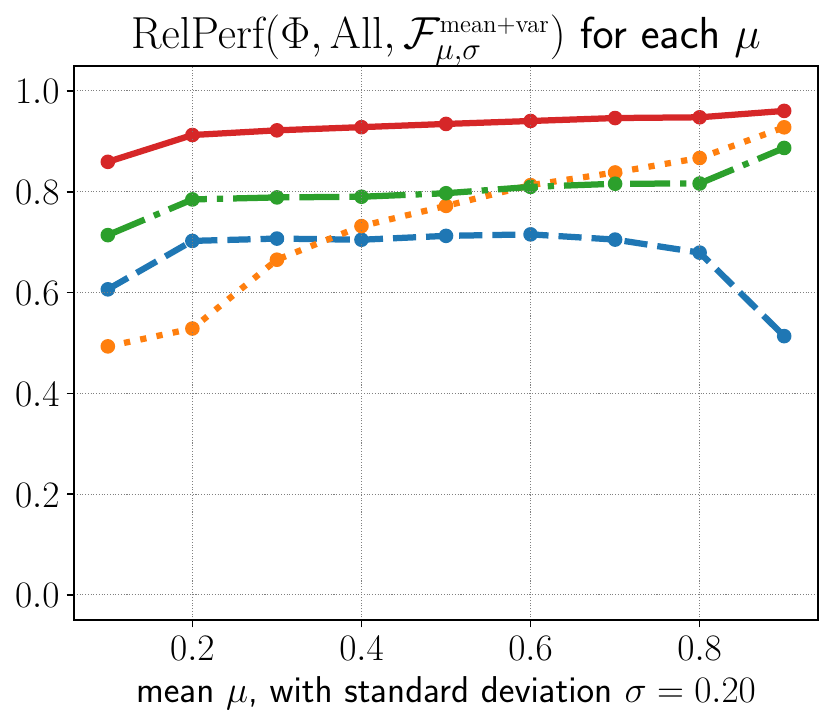}
	\end{subfigure}

	\begin{subfigure}{0.48\linewidth}
		\centering
		\includegraphics[height=0.75\linewidth]{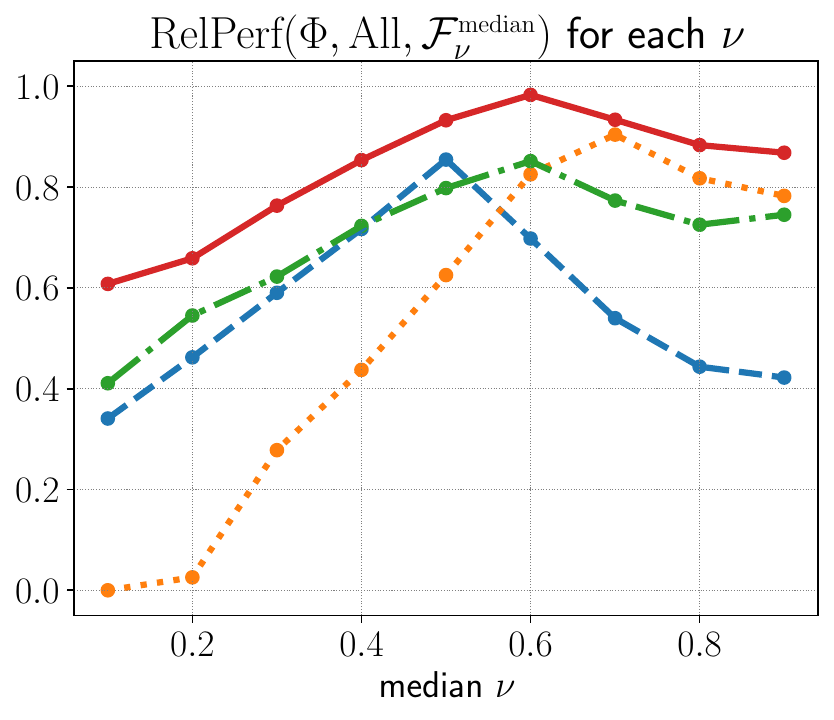}
	\end{subfigure}
 \begin{subfigure}{0.48\linewidth}
    	\centering
    	\includegraphics[height=0.75\linewidth]{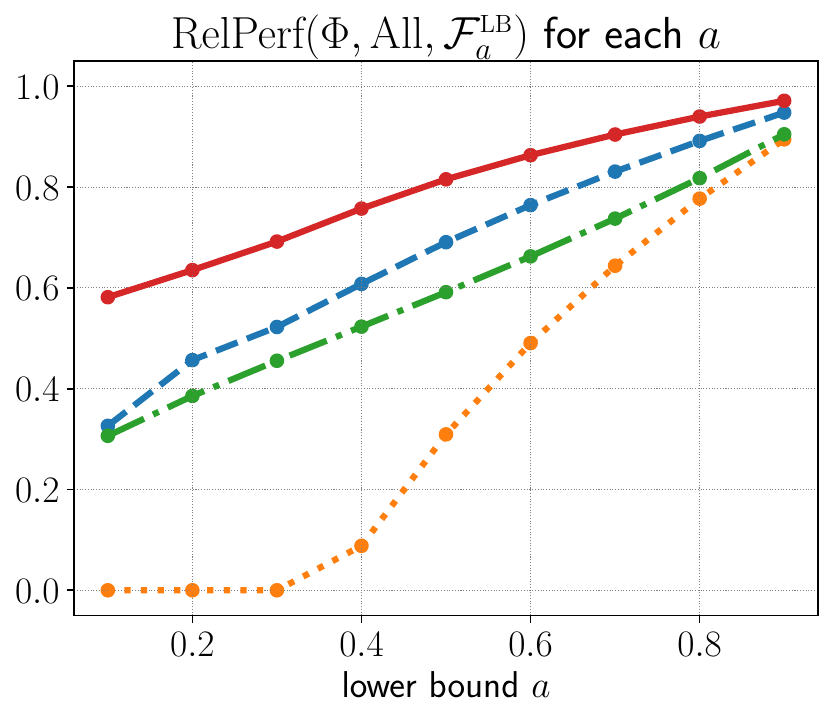}

    \end{subfigure}%
	\caption{Relative performance of different mechanisms $\Phi \in \{\mechregret, \mechrevenue, \mechratio, \mechall\}$ over all criteria as a function of the key parameter, in four classes of uncertainty sets: known mean $\calFmu$, known mean and variance $\calFmusigma$,  known median $\calFmedian$, and known lower bound $\calFa$.  }
\label{fig:relperf-by-param-all}
\end{figure}

We observe that across all instances, there is significant room to improve relative performance across criteria, when deviating from the three focal mechanisms. A striking case is that of median equal to 0.6. Despite the fact that each focal mechanism has relative performance below 85\% across criteria, the uniformly optimal mechanism achieves 98\% of performance across criteria. In general, the uniformly optimal mechanism performs around 10-15\% better than the \textit{best} focal mechanism \textit{at that parameter value}. However, if we stick to a single focal mechanism, there are parameter regions where that focal mechanism performs worse than that. (This is particularly true for $\mechrevenue$ and $\mechregret$; while $\mechratio$ has a significant performance gap against $\mechall$, the gap is fairly constant across parameter values.) The exception to this trend is the case with known lower bound $\calFa$, where $\mechrevenue$ dominates the other two focal mechanisms uniformly. This is because there are no other constraints on the distribution other than the support, so Nature can always put all its mass at the lower bound $a$ in the worst-case revenue criterion, so the revenue criterion is particularly sensitive to the mass at $a$ in particular, which is not taken into account in the other two focal mechanisms. Even in this case, however, we see that $\mechrevenue$ might not perform as well against the other two criteria, and the uniformly robust mechanism $\mechall$ significantly improves performance.

\paragraph{Form of the mechanisms.} Lastly, we illustrate the various prescriptions that emerge. To that end, we select example uncertainty sets from each of the four classes $\calFmu, \calFmusigma, \calFmedian, \calFa$ and plot the price CDF distributions of mechanisms $\Phi \in \{\mechrevenue, \mechregret, \mechratio, \mechall\}$ for comparison in Figure~\ref{fig:phi-compare}.  
\begin{figure}[h!]
\captionsetup{justification=centering}
	\begin{subfigure}{0.48\linewidth}
    	\centering
    	\includegraphics[height=0.75\linewidth]{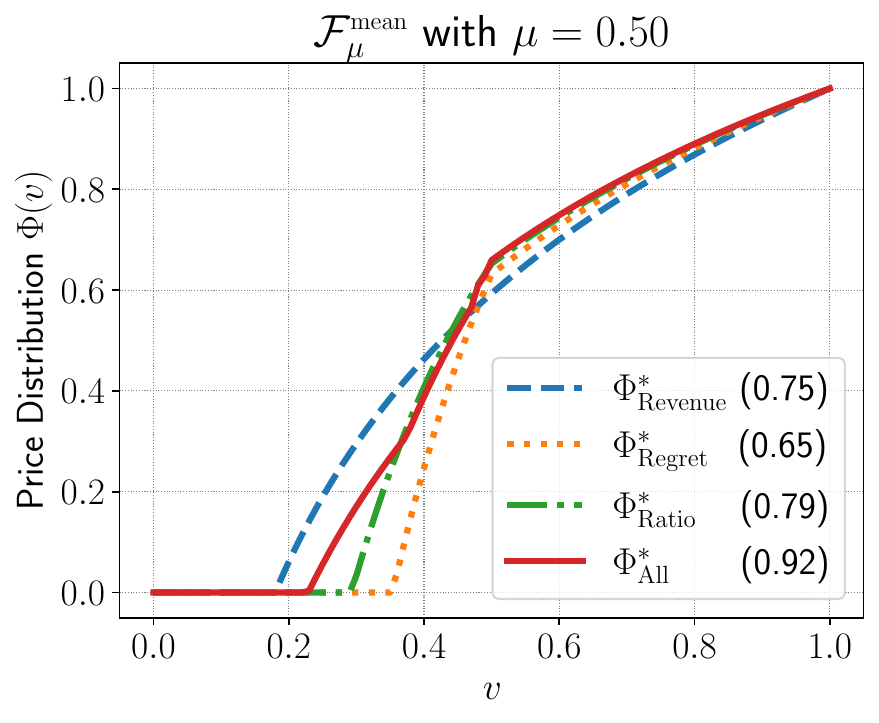}
    	\caption{mean information $\calFmu$}
    \end{subfigure}%
	\begin{subfigure}{0.48\linewidth}
		\centering
		\includegraphics[height=0.75\linewidth]{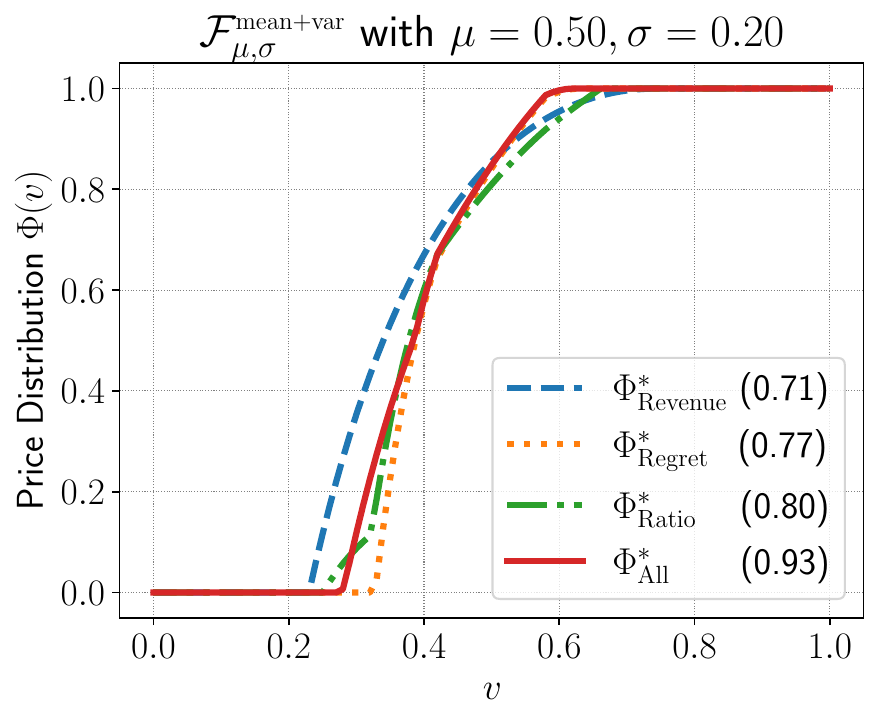}
		\caption{mean and variance information $\calFmusigma$}

	\end{subfigure}
	\begin{subfigure}{0.48\linewidth}

		\centering
		\includegraphics[height=0.75\linewidth]{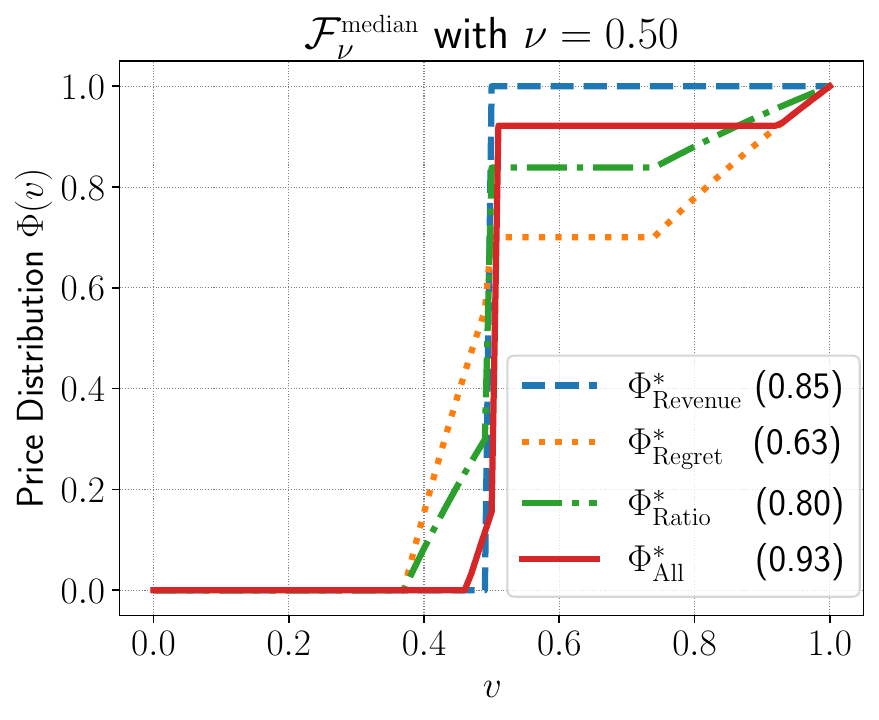}
		
		\caption{median information $\calFmedian$ }

	\end{subfigure}
	\begin{subfigure}{0.48\linewidth}
    	\centering
    	\includegraphics[height=0.75\linewidth]{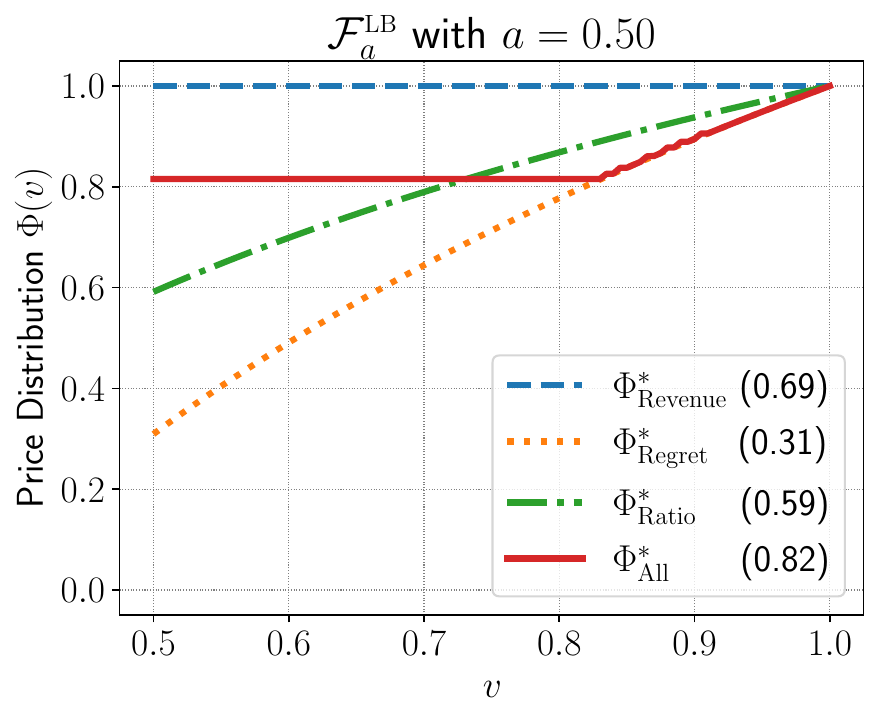}
    	\caption{lower bound information $\calFa$}
    \end{subfigure}%

	\caption{Comparison of different price distributions $\Phi \in \{\mechrevenue, \mechregret, \mechratio, \mechall\}$ for prototypical uncertainty sets in each of the four classes. Each number in parenthesis is $\relperf(\Phi,\all,\calF)$ for the corresponding $\Phi$ and $\calF$. 
 }
\label{fig:phi-compare}
\end{figure}

Each number in parenthesis of the plot legend in Figure~\ref{fig:phi-compare} is $\relperf(\Phi,\all,\calF)$, the relative performance of the corresponding mechanism $\Phi$ under uncertainty set $\calF$. As the relative performance numbers suggest, there is a lot of room for improvement. In the cases of $\calFmu, \calFmusigma, \calFmedian$, by changing from $\mechratio$ to $\mechall$, we get a 13\%, 13\%, and 8\% improvement, respectively. This highlights an interesting property in robust pricing. The mechanism $\mechall$ is not a drastic adjustment to the other mechanisms and yet can allow to obtain much stronger criteria robustness.

\section{Concluding Remarks}

In this study, we investigate robust decision-making and the extent of criteria-overfitting that may take place.  In the context of robust pricing, we derive a comprehensive analysis that enables us to answer these central questions.  We show across a variety of settings that overfitting is a real issue. Mechanisms tend to be highly tailored to the robust criterion selected and can perform very poorly when evaluated against alternative robustness criteria. Furthermore, we develop a formulation to obtain the best of many criteria.  We show that it is possible to significantly outperform any of the focal mechanisms, and significantly increase the uniform guarantees across criteria. 

 This opens up various avenues for future research. While we have focused on the classical pricing problem, a fruitful avenue may be the systematic study of classical problem classes, and more broadly delineating which problems can be fragile to the selection of a particular robust criterion, and when can such fragility be addressed.


\bibliographystyle{plainnat}
\bibliography{references}

\newpage
\appendix
\pagenumbering{arabic}
\renewcommand{\thepage}{App-\arabic{page}}
\renewcommand{\theequation}{\thesection-\arabic{equation}}
\renewcommand{\thelemma}{\thesection-\arabic{lemma}}
\renewcommand{\theproposition}{\thesection-\arabic{proposition}}
\setcounter{page}{1}
\setcounter{section}{0}
\setcounter{proposition}{0}
\setcounter{lemma}{0}
\setcounter{equation}{0}

\setcounter{footnote}{0}

\begin{center}
 {\Large \textbf{Electronic Companion: 
\\ Robust Auction Design with Support Information \\}
\medskip
\ifx\blind\undefined
Jerry Anunrojwong\footnote{Columbia University, Graduate School of Business. Email: {\tt janunrojwong25@gsb.columbia.edu}}, ~
Santiago R. Balseiro\footnote{Columbia University, Graduate School of Business. Email: {\tt srb2155@columbia.edu}.}, ~ and Omar Besbes\footnote{Columbia University, Graduate School of Business. Email: {\tt ob2105@columbia.edu}.}.
\fi}
\end{center}

\vspace{-4em}

\addcontentsline{toc}{section}{Appendix} 
\part{Appendix} 
\setstretch{1.0}
\parttoc 
\newpage

\section{Proof: Equivalence of Two Definitions of $\relperf(\Phi,\all,\calF)$}

We observe that the relative performance over all objectives can equivalently be computed as a minimum of individual relative performances across each objective.

\begin{proposition}[Equivalent Definition of Relative Performance Over All Objectives]\label{prop:relperf-equivalent}
    \begin{align}
        \relperf(\Phi,\all,\calF) 
    = \min\left\{ \frac{\min_{F \in \calF} \revenue(\Phi,F) }{ \theta^*_\revenue}, \frac{\theta^*_\regret}{ \max_{F \in \calF} \regret(\Phi,F) } , \frac{ \min_{F \in \calF} \ratio(\Phi,F) }{ \theta^*_\ratio }  \right\}.
    \end{align}
\end{proposition}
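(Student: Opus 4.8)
The plan is to observe that the two expressions for $\relperf(\Phi,\all,\calF)$ differ only in the order in which one takes the minimum over environments $F \in \calF$ and the minimum over the three criteria, plus a reciprocal rewriting of the regret term. First I would abbreviate the three relative-performance ratios as functions of $F$ by setting $g_\revenue(F) = \revenue(\Phi,F)/\theta^*_\revenue$, $g_\regret(F) = \theta^*_\regret/\regret(\Phi,F)$, and $g_\ratio(F) = \ratio(\Phi,F)/\theta^*_\ratio$, so that the defining expression \eqref{eqn:relperf-all-def} reads $\relperf(\Phi,\all,\calF) = \min_{F \in \calF} \min\{g_\revenue(F), g_\regret(F), g_\ratio(F)\}$.

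The key step is to interchange the two minimizations. Since the inner minimum ranges over the finite index set $\{\revenue,\regret,\ratio\}$ and the outer minimum ranges over $F \in \calF$, both sides equal the joint minimum of $g_k(F)$ over the product of the two index sets; hence iterated minima commute, giving $\relperf(\Phi,\all,\calF) = \min\{\min_F g_\revenue(F),\ \min_F g_\regret(F),\ \min_F g_\ratio(F)\}$. The minima over $F$ are attained because $\calF$ is compact (a closed, bounded set of distributions cut out by the moment and quantile equalities) and $\revenue$, $\regret$, $\ratio$ are continuous in $F$, so writing $\min$ rather than $\inf$ is justified.

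It then remains to rewrite each inner minimum. For revenue and ratio the normalizers $\theta^*_\revenue$ and $\theta^*_\ratio$ are positive constants independent of $F$, so they factor out: $\min_F g_\revenue(F) = \big(\min_F \revenue(\Phi,F)\big)/\theta^*_\revenue$, and likewise for ratio. The only non-routine term is regret, where I would use that the map $x \mapsto \theta^*_\regret/x$ is strictly decreasing on the positive reals (as $\theta^*_\regret > 0$), so minimizing the ratio corresponds to maximizing the denominator: $\min_F \theta^*_\regret/\regret(\Phi,F) = \theta^*_\regret/\max_F \regret(\Phi,F)$. Substituting these three identities into the expression from the previous paragraph recovers exactly \eqref{eqn:relperf-decoupled}.

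The main obstacle is purely the sign/positivity bookkeeping in the regret reciprocal: the manipulation relies on $\theta^*_\regret > 0$ and on $\regret(\Phi,F) \ge 0$ with a strictly positive value at the maximizing $F$. I would therefore note at the outset that regret is nonnegative by definition, and that the degenerate case of identically-zero regret (where the normalization is vacuous and both sides agree by convention) does not arise for the non-trivial uncertainty sets under consideration. Beyond this, the argument involves no genuine optimization difficulty — only commuting minima and factoring out positive constants.
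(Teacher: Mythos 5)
Your proof is correct and is essentially the paper's own argument: the paper's two-inequality proof (decoupled $\le$ joint via term-wise bounds for each fixed $F$, and joint $\le$ decoupled via evaluating at a decoupled minimizer) is precisely an unwound version of your observation that iterated minima over $F \in \calF$ and over the three criteria commute, combined with the reciprocal rewriting $\min_F \theta^*_\regret/\regret(\Phi,F) = \theta^*_\regret/\max_F \regret(\Phi,F)$. Your added remarks on attainment and positivity address degeneracies the paper silently ignores, but they introduce no substantive difference in approach.
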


The difference between \eqref{eqn:relperf-all-def} and \eqref{eqn:relperf-decoupled} is that in \eqref{eqn:relperf-all-def}, the \textit{same} distribution $F$ is used to evaluate the relative performance for all 3 objectives, whereas in \eqref{eqn:relperf-decoupled}, the relative performance, and thus the worst case evaluation, of each objective is computed separately.

\begin{proof}[Proof of Proposition~\ref{prop:relperf-equivalent}]
Because, for every $F \in \calF$,
\begin{align*}
    \min_{F_1 \in \calF} \revenue(\Phi,F_1) &\leq \revenue(\Phi,F) \\
    \max_{F_2 \in \calF} \regret(\Phi,F_2) &\geq \regret(\Phi,F) \\
    \min_{F_3 \in \calF} \ratio(\Phi,F_3) &\leq \ratio(\Phi,F) ,
\end{align*}
we have, for every $F \in \calF$,
\begin{align*}
    \eqref{eqn:relperf-decoupled} \leq \min\left\{ \frac{\revenue(\Phi,F) }{ \theta^*_\revenue}, \frac{\theta^*_\regret}{ \regret(\Phi,F) } , \frac{  \ratio(\Phi,F) }{ \theta^*_\ratio }  \right\} .
\end{align*}
Therefore, $\eqref{eqn:relperf-decoupled} \leq \eqref{eqn:relperf-all-def}$. Now we will prove the converse, $\eqref{eqn:relperf-all-def} \leq \eqref{eqn:relperf-decoupled}$. Let $\eqref{eqn:relperf-decoupled} = c$. Because $\eqref{eqn:relperf-decoupled}$ is a minimum of 3 values, at least one of those values must be $c$. We first consider the case that the first term is $c$. Let $F_1^* \in \arg\min_{F_1 \in \calF} \revenue(\Phi,F_1)$. Then,
\begin{align*}
    \eqref{eqn:relperf-decoupled} &\leq \min\left\{ \frac{\revenue(\Phi,F_1^*)}{\theta^*_\revenue} , \frac{\theta^*_\regret}{\regret(\Phi,F_1^*)}, \frac{\ratio(\Phi,F_1^*)}{\theta^*_\ratio}\right\} \\
    &= \min\left\{ c , \frac{\theta^*_\regret}{\regret(\Phi,F_1^*)}, \frac{\ratio(\Phi,F_1^*)}{\theta^*_\ratio}\right\} \leq c .
\end{align*}

The other two cases (that the regret term is equal to $c$, or the ratio term is equal to $c$) allow us to proceed similarly that $\eqref{eqn:relperf-decoupled} \leq c$. We therefore conclude that $\eqref{eqn:relperf-all-def} \leq \eqref{eqn:relperf-decoupled}$.

\end{proof}

\section{Proof: Reduction to Linear Programs}\label{app:sec:lp-formulation}

\begin{proof}[Proof of Proposition~\ref{prop:minimax-lmbd-regret-lp}]

The $\lambda$-regret of a mechanism $\Phi$ can be rewritten as
\begin{align*}
    \min_{\theta} & \: \theta \\
    \text{ s.t. }  & \: \theta \geq \max_{F \in \mathcal{F}} \left\{ \lambda \left\{\max_{p\in \gridval}  p \bar{F}(p)\right\} - \int \int_{s \leq v} s d\Phi(s) dF(v) \right\},
\end{align*}
which is equivalent to
\begin{align*}
    \min_{\theta} & \: \theta \\
    \text{ s.t. }  & \: \theta \geq \max_{F \in \mathcal{F}} \left\{ \lambda p \bar{F}(p) - \int \int_{s \leq v} s d\Phi(s) dF(v) \right\} \quad \forall p \in \gridval.
\end{align*}

Recall that the set $\mathcal{F}$ defined in \eqref{eqn:uncertainty-set} has linear constraints in terms of moments and quantiles. Namely, we know that $F$ has the $i$'th moment equal to $m_i$, and it has $F(r_j) = q_j$. 
 Then the maximization in the constraint corresponding to price $p$ (which we will call the $p$-program) is given by
\begin{align*}
    \max_{dF \geq 0} & \: \int_{\gridval} \left\{ \lambda p \1(v \geq p) - \int_{s \leq v} s d\Phi(s) \right\}  dF(v)  \\
    \text{ s.t. } &\: \int_{\gridval} v^i dF(v) = m_i \quad \forall i \in \mathcal{I} \\
    & \: \int_{\gridval} \1(v \geq r_j) dF(v) = q_j \quad \forall j \in \mathcal{J}.
\end{align*}
Note here that we have $m_0 = 1$, imposing that F is a CDF. 

We now apply LP duality and analyze the dual problem instead. Let the dual variables of the $p$-program be $\alpha_{i}(p) \in \mathbb{R}$ and $\beta_{j}(p) \in \mathbb{R}$, so it becomes
\begin{align*}
    \min_{\alpha(p), \beta(p) }  \max_{dF \geq 0} & \: \int_{\gridval} \left\{ \lambda p \1(v \geq p) - \int_{s \leq v} s d\Phi(s) \right\} dF(v) \\
    & \: + \sum_{i \in \mathcal{I}} \alpha_{i}(p) \left( m_i - \int v^i dF(v) \right) + \sum_{j \in \mathcal{J}} \beta_{j}(p) \left( q_j - \int \1(v \geq r_j) dF(v) \right).
 \end{align*}
Note that the objective above may be rewritten as
\begin{align*}
\sum_{i \in \mathcal{I}} \alpha_{i}(p) m_i + \sum_{j \in \mathcal{J}} \beta_{j}(p) q_j + \int_a^b \Bigg( \lambda p \1(v \geq p) - \int_{s \leq v} s d\Phi(s)  - \sum_{i \in \mathcal{I} } \alpha_{i}(p) v^i - \sum_{j \in \mathcal{J}} \beta_{j}(p) \1(v \geq r_j) \Bigg) dF(v).
\end{align*}
The dual variables must be such that the coefficient of each $dF(v)$ is non-positive in order for the maximum to not be $\infty$. Therefore the dual problem reduces to
\begin{align*}
    \min_{\alpha(p), \beta(p) }  & \: \sum_{i \in \mathcal{I}} \alpha_{i}(p) m_i + \sum_{j \in \mathcal{J}} \beta_{j}(p) q_j \\
   \text{ s.t. } & \:  \lambda p \1(v \geq p) - \int_{s \leq v} s d\Phi(s)  - \sum_{i \in \mathcal{I} } \alpha_{i}(p) v^i - \sum_{j \in \mathcal{J}} \beta_{j}(p) \1(v \geq r_j) \leq 0 \quad \forall v \in \gridval.
\end{align*}

Returning to the problem of computing the worst case $\lambda$-regret of a mechanism $\Phi$, one may rewrite this problem as
\begin{align*}
    \min_{\theta, \alpha, \beta } & \: \theta \\
    \text{ s.t. }  & \: \theta \geq  \sum_{i \in \mathcal{I}} \alpha_{i}(p) m_i + \sum_{j \in \mathcal{J}} \beta_{j}(p) q_j  \quad \forall p \in \gridval \\
    & \: \lambda p \1(v \geq p) - \int_{s \leq v} s d\Phi(s)  - \sum_{i \in \mathcal{I} } \alpha_{i}(p) v^i - \sum_{j \in \mathcal{J}} \beta_{j}(p) \1(v \geq r_j) \leq 0 \quad \forall v, p \in \gridval.
\end{align*}

Similarly, the problem of computing minimax $\lambda$-regret is the problem of finding $\Phi$ such that the worst-case $\lambda$-regret is minimized. Given that the problem of worst-case $\lambda$-regret is already written as a minimization problem above, the minimax $\lambda$-regret is equal to the value of the above linear program where $\Phi \in \mathcal{M}$ is also a variable to optimize over:
\begin{align*}
    \min_{\Phi, \theta, \alpha, \beta } & \: \theta  \\
    & \: \theta \geq  \sum_{i \in \mathcal{I}} \alpha_{i}(p) m_i + \sum_{j \in \mathcal{J}} \beta_{j}(p) q_j  \quad \forall p \in \gridval \\
   & \:  \lambda p \1(v \geq p) - \int_{s \leq v} s d\Phi(s)  - \sum_{i \in \mathcal{I} } \alpha_{i}(p) v^i - \sum_{j \in \mathcal{J}} \beta_{j}(p) \1(v \geq r_j) \leq 0 \quad \forall v, p \in \gridval.
\end{align*}
This completes the proof. 
\end{proof}

\section{Linear Programs for Maximin Ratio}\label{sec:maximin-ratio-lp}

Throughout the main text, we characterize the maximin ratio via minimax $\lambda$-regret, computing the value for different values of $\lambda$ and use line search to find the largest $\lambda$ such that minimax $\lambda$-regret is positive. This formulation has the benefit that the $\lambda$-regret unifies all three focal objectives in one framework, but requires us to do more computation, having to solve many LPs instead of one. In this Appendix, we show that the maximin ratio can be computed with a single LP. The derivation closely parallels that of \cite[Theorem 1]{shixin-ratio} and is thus omitted. (The only difference is that we also have quantile constraints with dual variables $\beta_j(p)$, but this does not add additional complexity because they are also linear constraints that can be treated the same way as the original linear moment constraints.)

\begin{proposition}[\cite{shixin-ratio}]\label{prop:maximin-ratio-lp}
    Let $\mathcal{M}$ be a given class of mechanisms. The maximin ratio problem
    \begin{align*}
        \max_{\Phi \in \mathcal{M}} \min_{F \in \calF} \ratio(\Phi,F)
    \end{align*}
    can be equivalently written as the objective to the following program
\begin{equation}
\begin{aligned}
\max_{r,\Phi, \alpha_i(p), \beta_j(p),\gamma} & \:  r \\ \textnormal{ s.t }  & \:  
rp \leq  \gamma(p) \quad \forall p \in \gridval \\
 & \:  \int_{s \leq v} s d\Phi(s) + \sum_{i \in \mathcal{I}} \alpha_i(p) (m_i - v^{i}) + \sum_{j \in \mathcal{J}} \beta_j(p) ( r_j - \1(v \geq q_j) ) - \gamma(p) \1(v \geq p)\geq 0 \quad \forall p, v \in \gridval \\
 & \:  \Phi \in \mathcal{M}
\end{aligned}
\label{eqn:lp-ratio}  \tag{Ratio-LP} 
\end{equation}
\end{proposition}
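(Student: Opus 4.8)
The plan is to mirror the hypograph reformulation and Lagrangian dualization used for the $\lambda$-regret linear programs, adapting them to the fractional ratio objective. First I would rewrite the maximin ratio as the search for the largest scalar $r$ such that some $\Phi \in \mathcal{M}$ satisfies $\revenue(\Phi,F) \ge r\,\opt(F)$ for every $F \in \calF$; this is just the hypograph form of $\max_\Phi \min_F \ratio(\Phi,F)$, and since ratios are nonnegative we may take $r \ge 0$. Using $\opt(F) = \max_{p \in \gridval} p\,\bar{F}(p)$ together with $r \ge 0$, the single constraint $\revenue(\Phi,F) \ge r\,\opt(F)$ is equivalent to the continuum of constraints $\revenue(\Phi,F) \ge r\,p\,\bar{F}(p)$ for all $p \in \gridval$: one direction follows because $\opt(F) \ge p\,\bar{F}(p)$ and $r\ge 0$, the other by evaluating at the revenue-maximizing price, which lies in $\gridval$. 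This is exactly the move that replaced $\opt$ by a family of price-indexed constraints in the proof of Proposition~\ref{prop:minimax-lmbd-regret-lp}.

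Next, for each fixed $p$ the requirement $\revenue(\Phi,F) - r\,p\,\bar{F}(p) \ge 0$ for all $F \in \calF$ is precisely $\min_{F \in \calF}\big[\revenue(\Phi,F) - r\,p\,\bar{F}(p)\big] \ge 0$, and the inner minimization is a linear moment problem in the measure $dF$ with the moment constraints $\int v^i dF = m_i$ and the quantile constraints indexed by $\mathcal{J}$. I would dualize this inner program exactly as in Proposition~\ref{prop:minimax-lmbd-regret-lp}, introducing price-indexed dual variables $\alpha_i(p)$ and $\beta_j(p)$ for the moment and quantile constraints respectively. On the finite grid $\gridval$ the inner program is a finite-dimensional linear program, so strong duality holds whenever $\calF \neq \emptyset$, and the condition ``inner minimum $\ge 0$'' becomes the existence of a dual-feasible certificate $(\alpha(p),\beta(p))$ whose dual objective is nonnegative. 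The quantile duals $\beta_j(p)$ are treated identically to the moment duals, which is the only place the derivation departs from \cite[Theorem 1]{shixin-ratio}.

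The one remaining nonlinearity is the bilinear product $r\,p$ entering through the term $r\,p\,\one(v \ge p)$. To linearize it I would introduce an auxiliary variable $\gamma(p)$ and impose $\gamma(p) \ge r p$, substituting $\gamma(p)$ for $r p$ in the dualized constraints; the resulting system is jointly linear in $(r,\Phi,\alpha,\beta,\gamma)$. I would then argue the relaxation is exact: since $\gamma(p)$ enters the per-$v$ constraints only through $-\gamma(p)\,\one(v\ge p)$, decreasing $\gamma(p)$ only relaxes feasibility, so at any optimum $\gamma(p)$ is driven down to its lower bound $rp$, recovering the original problem. Collecting the dual-feasibility constraints for all $p,v \in \gridval$, the nonnegativity of the dual objectives, the coupling $rp \le \gamma(p)$, and $\Phi \in \mathcal{M}$, and maximizing $r$, yields the stated program \eqref{eqn:lp-ratio}.

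The main obstacle is the careful treatment of the ratio's fractional structure: unlike the $\lambda$-regret programs, where $\lambda$ is fixed, here $r$ is itself a decision variable multiplying $\opt(F)$, so the $\gamma(p)$ linearization and the verification that $\gamma(p)=rp$ is without loss are the crux of the argument. A secondary point to check is that strong duality genuinely applies to the inner moment problem; on the discrete grid this reduces to finite-dimensional LP duality and requires only $\calF \neq \emptyset$, whereas a continuum formulation would instead invoke a moment-duality or Slater-type condition. Everything else is bookkeeping that parallels Propositions~\ref{prop:minimax-regret-exp} and~\ref{prop:minimax-lmbd-regret-lp}.
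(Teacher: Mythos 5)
Your derivation is correct in substance and follows the same route as the paper, which omits the proof and defers to the duality derivation of \cite[Theorem 1]{shixin-ratio}: write the maximin ratio in hypograph form, replace $\opt(F)$ by a family of price-indexed constraints, dualize the inner moment problem for each $p$ (with the quantile duals $\beta_j(p)$ treated exactly like the moment duals $\alpha_i(p)$, which is all the paper's own remark adds), and invoke finite-dimensional LP strong duality on the grid $\gridval$. One conceptual correction, though: your claimed ``crux'' --- that $r\,p$ is a bilinear product requiring the $\gamma(p)$ linearization --- is a misreading. Here $p$ is not a decision variable but an index ranging over the finite grid $\gridval$, so $r\,p\,\1(v\ge p)$ is already linear in the decision variable $r$, and your step-3 program is an LP exactly as it stands. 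Introducing $\gamma(p)$ with $rp\le\gamma(p)$ is purely cosmetic (in \cite{shixin-ratio} it arises naturally as the dual variable of a Charnes--Cooper-type normalization constraint); your argument that $\gamma(p)$ can be driven down to its lower bound $rp$ is nevertheless valid, so this misconception does not break the proof.

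The one genuine (if small) unfinished step is that the program you collect at the end is not literally \eqref{eqn:lp-ratio}. Yours keeps the dual-objective nonnegativity $\sum_{i}\alpha_i(p)m_i + \sum_j \beta_j(p) q_j \ge 0$ as a separate constraint family alongside pointwise constraints of the form $\int_{s\le v}s\,d\Phi(s) - \sum_i \alpha_i(p)v^i - \sum_j \beta_j(p)\1(v\ge r_j) - \gamma(p)\1(v\ge p)\ge 0$, whereas \eqref{eqn:lp-ratio} folds the constants into a single pointwise constraint through $\alpha_i(p)(m_i - v^i)$ and $\beta_j(p)(q_j-\1(v\ge r_j))$ (the printed $\beta_j(p)(r_j - \1(v\ge q_j))$ transposes $r_j$ and $q_j$ relative to the paper's convention $\bar{F}(r_j)=q_j$; compare \eqref{eqn:lp-regret}). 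You assert the two coincide; they are only equivalent, and the bridge should be stated. Summing your two constraint families shows any point feasible for your program is feasible for \eqref{eqn:lp-ratio} at the same $r$; conversely, from a feasible point of \eqref{eqn:lp-ratio}, replacing $\alpha_0(p)$ by $\alpha_0(p) - \bigl(\sum_i \alpha_i(p) m_i + \sum_j \beta_j(p)q_j\bigr)$ --- legitimate because $0\in\mathcal{I}$ with $m_0=1$ and $v^0=1$ --- yields a feasible point of your program at the same $r$. With that equivalence (or by running weak duality directly on \eqref{eqn:lp-ratio}: integrating its pointwise constraint against any $F\in\calF$ gives $\revenue(\Phi,F)\ge \gamma(p)\bar{F}(p)\ge rp\,\bar{F}(p)$ for every $p$, hence $\ratio(\Phi,F)\ge r$), your proof is complete.
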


Note that Proposition~\ref{prop:maximin-ratio-lp} above holds for any $\mathcal{M}$. When we want to compute the maximin ratio $\theta_\ratio$, we take $\mathcal{M}$ to be the set of all distributions on $\mathcal{G}$. When we want to compute the cross-performance (that is, evaluate the worst-case ratio of either the minimax regret or maximin revenue mechanisms), we take the set of mechanisms to be $\mathcal{M}_\old$

Note that the above works whether $\Phi \in \mathcal{M}$ (optimize over all CDFs) or $\Phi \in \mathcal{M}_{\old}$ (the cross-LP). With $\Phi \in \mathcal{M}_{\old}$, we just translate it with duality as done before (the dual variables associated with the original ratio programs are $\alpha_\new,\beta_\new$, whereas the dual variables associated with $\mathcal{M}_{\old}$ are $\alpha_\old,\beta_\old$):

\begin{equation}
\begin{aligned}
& \max_{r,\Phi, \alpha_{\new,i}(p), \beta_{\new,j}(p),\alpha_{\old,i}(p),\beta_{\old,j}(p),\gamma}  r \textnormal{ s.t } \\
& rp \leq  \gamma(p) \quad \forall p \in \gridval \\
&\int_{s \leq v} s d\Phi(s) + \sum_{i \in \mathcal{I}} \alpha_{\new,i}(p) (m_i - v^{i}) + \sum_{j \in \mathcal{J}} \beta_{\new,j}(p) ( r_j - \1(v \geq q_j) ) - \gamma(p) \1(v \geq p)\geq 0 \quad \forall p, v \in \gridval \\
& r_\old \geq  \sum_{i \in \mathcal{I}} \alpha_{\old,i}(p) m_i + \sum_{j \in \mathcal{J}} \beta_{\old,j}(p) q_j  \quad \forall p \in \gridval \\
& \lambda_{\old} p \1(v \geq p) - \int_{s \leq v} s d\Phi(s)  - \sum_{i \in \mathcal{I} } \alpha_{\old,i}(p) v^i - \sum_{j \in \mathcal{J}} \beta_{\old,j}(p) \1(v \geq r_j) \leq 0 \quad \forall p,v \in \gridval  \\
& \Phi \text{ is a CDF.}
\end{aligned}
\label{eqn:lp-ratio-cross}  \tag{Ratio-LP-Cross} 
\end{equation}

As before, if the old objective is regret, we set $\lambda_\old = 1, r_\old = \theta_\regret$ and if the old objective is revenue, we set $\lambda_\old = 0, r_\old = - \theta_\revenue$.

\end{document}